\numberwithin{figure}{section}
\theoremstyle{plain}
\newtheorem{thm}{\protect\theoremname}[section]
  \theoremstyle{plain}
  \newtheorem{prop}[thm]{\protect\propositionname}
  \theoremstyle{plain}
  \newtheorem{lem}[thm]{\protect\lemmaname}
  \theoremstyle{definition}
  \newtheorem{defn}[thm]{\protect\definitionname}
  \theoremstyle{remark}
  \newtheorem{rem}[thm]{\protect\remarkname}
\renewcommand\equationautorefname{\@gobble}
\numberwithin{equation}{section}
\definecolor{Red}{rgb}{1,0,0}
\definecolor{Blue}{rgb}{0,0,1}
\title{Concentration of Geodesics in Directed Bernoulli Percolation}
\author{Christian Houdr\'{e}\thanks{School of Mathematics, Georgia Institute of Technology, Atlanta, Georgia 30332-0160. {\tt Email:\,houdre@math.gatech.edu}. Research supported in part by the grants \# 246283 and \# 524678 from the Simons Foundation.} \and Chen Xu \thanks{School of Mathematics, Georgia Institute of Technology, Atlanta, Georgia 30332-0160. {\tt Email:\,cxu60@math.gatech.edu}.}}
  \providecommand{\definitionname}{Definition}
  \providecommand{\lemmaname}{Lemma}
  \providecommand{\propositionname}{Proposition}
  \providecommand{\remarkname}{Remark}
\providecommand{\theoremname}{Theorem}
\begin{document}
\maketitle

\makeatletter \def\blfootnote{\gdef\@thefnmark{}\@footnotetext} \makeatother

\blfootnote{MSC2010: Primary 60K35, 82B43.}

\blfootnote{Keywords: Last-passage percolation, geodesics, KPZ universality, first-passage percolation, longest common subsequence, random words, transversal exponent, concentration phenomenon.}
\begin{abstract}
For directed Bernoulli last passage percolation with i.i.d.~weights
on vertices over a $n\times n$ grid and for $n$ large enough, the
geodesics are shown to be concentrated in a cylinder, centered on
the main diagonal and of width of order $n^{(2\kappa+2)/(2\kappa+3)}\sqrt{\ln n}$,
where $1\le\kappa<\infty$ is the curvature power of the shape function
at $(1,1)$. The methodology of proof is robust enough to also apply
to directed Bernoulli first passage site percolation, and further
to longest common subsequences in random words.
\end{abstract}

\section{Introduction}

It has been initially conjectured in \cite{krug1991kinetic} that
many percolation systems including undirected/directed, first/last
passage percolation falls into the KPZ universality class. They are
expected to satisfy the scaling relation: $\chi=2\xi-1$, where $\chi$
and $\xi$ are respectively the shape and the transversal fluctuations
exponents. Moreover, $\chi$ can also be viewed as the asymptotic
order of the standard deviation of the first/last passage time, while
geodesics are expected to be confined to a cylinder around the diagonal
of width of asymptotic order $n^{\xi}$. Specifically, on a two dimensional
$n\times n$ grid, it is conjectured that $\chi=1/3$ and $\xi=2/3$.
However, to date, it has only been shown that $\chi\le1/2$ and that
$\xi\le3/4$, under various types of assumptions. The upper bound
$3/4$ is obtained in \cite{newman1995divergence} by showing that
$2\xi\le1+\chi'$, where $\chi'$ is an exponent closely related to
$\chi$ and is itself upper-bounded by $1/2$ . The relation $2\xi=1+\chi$
has also been recently proved under different definitions of $\xi$
and $\chi$ in \cite{chatterjee2011universal} (see also \cite{auffinger2014simplified}).
As for the bounds for the shape fluctuations exponent $\chi$, fewer
results are available. To date, a sublinear order $O(\sqrt{n/\ln n})$,
in the context of first passage percolation (FPP) with various types
of weight distributions has been shown in \cite{benjamini2011first,damron2015sublinear}.
For a list of other definitions and results on these topics, we refer
the interested readers to the recent comprehensive survey \cite{auffinger201550}. 

Transversal fluctuations have also been studied in a related problem,
i.e., the analysis of the longest common subsequences (LCSs) in two
random words of length $n$. As well known, LCSs can be viewed as
directed last passages in a two-dimensional percolation grid with
dependent Bernoulli weights. It is proved in \cite{houdre2009closeness}
that the optimal alignments corresponding to the LCSs also stay, with
high probability, in a sector close to the diagonal. Moreover, when
it comes to the shape fluctuation, i.e., the standard deviation of
$LC_{n}$, the length of the LCSs, the results are more complete:
First, by the Efron-Stein inequality, the shape fluctuations are upper
bounded by $\sqrt{n}$, for arbitrary distributions on any finite
dictionary. Second, a lower bound of order $\sqrt{n}$ has been obtained
under various asymmetry assumptions (\cite[...]{lember2009standard,HoudreLCSVARLB2012}).
More noticeably, a central limit theorem has been proved for $LC_{n}$
in \cite{HoudreCLT2014}.

In the present paper, we mainly study the transversal fluctuations
in directed last passage percolation (DLPP) and briefly extend it
to other settings. Our methodology shows that, with high probability,
geodesics in DLPP are confined to a cylinder, around the main diagonal,
of width of order $n^{(2\kappa+2)/(2\kappa+3)}\sqrt{\ln n}$, where
$1\le\kappa<+\infty$ is the curvature power of the shape function
at $(1,1)$.

The model under study is the classical one: DLPP on a $n\times n$
grid with $(n+1)^{2}$ vertices, each of which is associated with
a Bernoulli random weight $w$, where $\mathbb{P}(w=1)=s=1-\mathbb{P}(w=0)$,
$0<s<1$, and all the weights are independent. The last passage time
$T(n,n)$ is the maximum of the sums of all the weights along all
unit-step up-right paths on the grid, from $(0,0)$ to $(n_{1},n_{2})$.
For convenience, the path is considered $left-open-right-closed$,
i.e., the weight on $(0,0)$ is excluded:
\[
T(n,n)=\max_{\pi\in\Pi}\sum_{v\in\pi\backslash{(0,0)}}w(v),
\]
where $\Pi$ is the set of all unit-step up-right paths from $(0,0)$
to $(n,n)$, and where each unit-step up-right path $\pi\in\Pi$ is
viewed as an ordered set of vertices, i.e., $\pi=\{v_{0}=(0,0),v_{1},...,v_{2n}=(n,n)\}$
such that $v_{i+1}-v_{i}$ ($i\in[2n-1]$) is either $\textbf{e}_{1}:=(1,0)$
or $\textbf{e}_{2}:=(0,1)$, and $w:\ v\rightarrow w(v)\in\{0,1\}$
is the random weight associated with the vertex $v\in[n]\times[n]$,
where $[n]:=\{0,1,2,...,n\}$. Hereafter \textit{directed path} is
short for unit-step up-right path and any directed path realizing
the last passage time is called a \textit{geodesic}. We also use the
notation $T(V_{1},V_{2})$ to denote the directed last passage time
for a rectangular grid from the lower-left vertex $V_{1}$ to the
upper-right vertex $V_{2}$ ($w(V_{1})$ is also excluded) and sometimes
use coordinates to express $V_{1}$ and $V_{2}$, $e.g.$, when $V_{1}=(i,j)$
and $V_{2}=(k,l)$, $T(V_{1},V_{2}):=T((i,j),(k,l))$.

Let us now briefly describe the content of the paper: in the next
section, we present properties of the shape function of DLPP and state
our main result (Theorem \ref{thm:mainmain}). Section \ref{Sec:skewblocks}
first introduces a way of decomposing the entire grid into blocks
in such a way that, with high probability, most of the blocks in any
optimal decomposition are close-to-square shaped. Next, an intermediate
rate of convergence result used in the proof of the main theorem is
further obtained. Finally, we exhibit two lines $\ell_{1}$ and $\ell_{2}$
respectively above and below the main diagonal, bounding a sector
within which, with high probability, geodesics are confined. Then,
by finely tuning the slopes of these two bounding lines, we produce
a concentration inequality for the fluctuations of the geodesics away
from the main diagonal. In the concluding Section \ref{sec:concluding-remarks},
extensions are briefly stated for the geodesics in directed first
passage percolation (DFPP). Then, the case of LCSs is presented and
some potential refinements are also discussed.

\section{Preliminaries and Main Results\label{Sec:Preliminaries}}

In this section, we introduce the shape function $g$ and a modification
$g_{\bot}$ ( $g$-perp) along with some of their properties. It is
well known that, by superadditivity and Fekete's Lemma, the non-negative
limit 
\[
\lim_{n\rightarrow\infty}\frac{\mathbb{E}T(nx,ny)}{n}=\limsup_{n\rightarrow\infty}\frac{\mathbb{E}T(nx,ny)}{n}:=g(x,y)
\]
exist for any $x,y\in\mathbb{R}^{+}$. The function $g$ is typically
called the shape function, and by a further application of superadditivity,
it can be shown to be concave (see \cite{martin2004limiting}). Instead
of studying $g$ directly, we are more interested in its orthogonal
modification, i.e., in the function $g_{\perp}$, given by $g_{\perp}(q)=g(1-q,1+q)$,
where $q\in(-1,+1)$. Since the transformation $(1-q,1+q)$ is linear,
it is trivial to transfer results from $g$ to $g_{\perp}$. Therefore,
from \cite{martin2004limiting}:
\begin{prop}
$g_{\perp}$ is non-negative and concave.
\end{prop}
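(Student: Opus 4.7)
The plan is to deduce both assertions directly from the corresponding properties of $g$ via the affine reparametrization $\varphi(q):=(1-q,1+q)$, so that $g_{\perp}=g\circ\varphi$.

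First I would check non-negativity. For $q\in(-1,1)$, both coordinates $1-q$ and $1+q$ lie in $(0,2)\subset\mathbb{R}^{+}$, so $\varphi(q)$ is in the domain where $g$ is defined and non-negative (as recalled from \cite{martin2004limiting}). Hence $g_{\perp}(q)=g(1-q,1+q)\geq 0$.

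For concavity, I would use the general fact that concavity is preserved under affine precomposition: since $\varphi$ is affine, $g\circ\varphi$ inherits concavity from $g$. Explicitly, for $q_{1},q_{2}\in(-1,1)$ and $\lambda\in[0,1]$, note that $\lambda q_{1}+(1-\lambda)q_{2}\in(-1,1)$ and unpack
\[
g_{\perp}(\lambda q_{1}+(1-\lambda)q_{2}) = g\bigl(\lambda(1-q_{1})+(1-\lambda)(1-q_{2}),\,\lambda(1+q_{1})+(1-\lambda)(1+q_{2})\bigr),
\]
which, by concavity of $g$, is at least $\lambda g(1-q_{1},1+q_{1})+(1-\lambda)g(1-q_{2},1+q_{2})=\lambda g_{\perp}(q_{1})+(1-\lambda)g_{\perp}(q_{2})$.

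There is no substantive obstacle: the whole point is that the transformation $(x,y)=(1-q,1+q)$ is linear (affine), so non-negativity and concavity transfer from $g$ to $g_{\perp}$ with no additional work, as already noted in the excerpt just before the statement.
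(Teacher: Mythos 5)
Your argument is correct and matches the paper's intent exactly: the paper simply cites the non-negativity and concavity of $g$ from \cite{martin2004limiting} and observes that the affine reparametrization $q\mapsto(1-q,1+q)$ transfers both properties to $g_{\perp}$, which is precisely what you spell out. No issues.
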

By the invariance of $g$ under any permutation of its coordinates,
i.e., since $g(x,y)=g(y,x)$, $g_{\perp}$ is symmetric about $q=0$.
Also $g_{\perp}((-1)^{+})=g_{\perp}(1^{-})=g(0,2)=2s$. Still, by
concavity, $g_{\perp}$ is non-decreasing on $(-1,0]$ and non-increasing
on $[0,1)$ and so $g_{\perp}$ attains its maximum at $q=0$. The
uniqueness of this maximum is not guaranteed but would follow from
the strict concavity of the shape function $g$ at $(1,1)$ which
has been conjectured, in particular, for i.i.d.~Bernoulli weights.
To date, strict concavity has not been proved for any weight distribution.
However, in the setting of undirected FPP, a class of weight distributions
has been shown (see \cite{durrett1981shape}) to produce a shape function
having a flat edge around the direction $(1,1)$. This class of weights
is further studied and more properties of the associated shape function
are obtained in \cite{marchand2002strict,zhang2008shape,zhang2010concentration,auffinger2013differentiability}.
Our first result Theorem \ref{thm:eventa} stating that, with probability
exponentially close to one, geodesics are bounded away from the upper-left
and lower-right corners of the grid, does not requires a strict-concavity
assumption. Instead, it merely requires the existence of a threshold
$t>0$ such that if $q\in(-1,-t)\cup(t,1)$, then $g_{\perp}(q)\lneqq g_{\perp}(0)=g(1,1)$,
i.e., that $g_{\perp}$ is not identically constant on $(-1,1)$.
Before tackling this threshold problem, let us better estimate $g_{\perp}(0)=g(1,1)$. 

First, it is clear that any directed path from $(0,0)$ to $(n,n)$
in a $n\times n$ grid covers exactly $2n$ vertices and the expected
passage time associated with up-right path is $2ns$. But, clearly,
the passage time associated with any such up-right path is at most
the last passage time. Thus $\mathbb{E}T(n,n)\ge2ns$. Therefore,
$g_{\perp}(0)\ge2s$, however this lower bound is strict. 
\begin{lem}
\label{lem:.lbg}$g_{\perp}(0)-2s\ge s(1-s)$.
\end{lem}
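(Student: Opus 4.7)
The plan is to improve the deterministic lower bound $\mathbb{E}T(n,n) \ge 2ns$ by exhibiting an adaptive up-right path $\pi^{*}$ from $(0,0)$ to $(n,n)$ whose expected weight exceeds $2ns$ by at least $ns(1-s)$. Since $T(n,n) \ge \sum_{v\in\pi^{*}\setminus\{(0,0)\}} w(v)$ pointwise for any up-right path, taking expectations and dividing by $n$ will give the desired bound on $g(1,1)=g_{\perp}(0)$ after letting $n\to\infty$.

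To construct $\pi^{*}$, I would decompose the trip from $(0,0)$ to $(n,n)$ into $n$ consecutive ``diagonal hops,'' the $i$-th hop going from $(i,i)$ to $(i+1,i+1)$ through exactly one of the two intermediate vertices $(i+1,i)$ or $(i,i+1)$. Since all weights are realized in advance, I can choose adaptively: for each hop, route through the intermediate whose weight is larger, breaking ties arbitrarily. This yields a legitimate up-right path, and crucially the vertex sets used by the different hops are pairwise disjoint (the intermediates lie strictly between consecutive diagonal vertices), so the weights contributing to distinct hops are independent.

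Computing the expected contribution of a single hop, the intermediate vertex contributes $\mathbb{E}\max(w(i+1,i),w(i,i+1))=1-(1-s)^{2}=s(2-s)$, while the diagonal vertex $(i+1,i+1)$ contributes $\mathbb{E}\,w=s$. Summing over the $n$ hops, I obtain
\[
\mathbb{E}T(n,n)\;\ge\;n\bigl(s(2-s)+s\bigr)\;=\;ns(3-s),
\]
so dividing by $n$ and passing to the limit gives $g_{\perp}(0)=g(1,1)\ge s(3-s)=2s+s(1-s)$, which rearranges to the claimed inequality.

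There is no real obstacle in this argument; it is an elementary ``pick the better of two coins'' construction. The only point worth being careful about is the disjointness of the vertex sets across hops, which ensures that the per-hop gains $s(1-s)$ over the baseline $2s$ add cleanly under expectation; any overlap would force one to account for correlations and could easily wash out the gain.
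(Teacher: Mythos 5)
Your proposal is correct and is essentially the paper's own argument: the paper likewise lower-bounds $T(n,n)$ by routing through the $n$ diagonal $1\times1$ blocks, taking for each block the maximum of the two off-diagonal corner weights (expected value $1-(1-s)^{2}$) plus the diagonal weight $s$, which gives $g(1,1)\ge 3s-s^{2}=2s+s(1-s)$. Your attention to the disjointness of the hops matches the paper's use of the $3n$ i.i.d.\ weights $T_{u}^{i}$, $T_{r}^{i}$, $T_{d}^{j}$, so nothing further is needed.
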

\begin{proof}
Consider the diagonal blocks in the $n\times n$ table, i.e., the
$n$ blocks of size $1\times1$ on the diagonal as in Figure \ref{fig:nonflat}.
\begin{figure}
\begin{centering}
\includegraphics[scale=0.7]{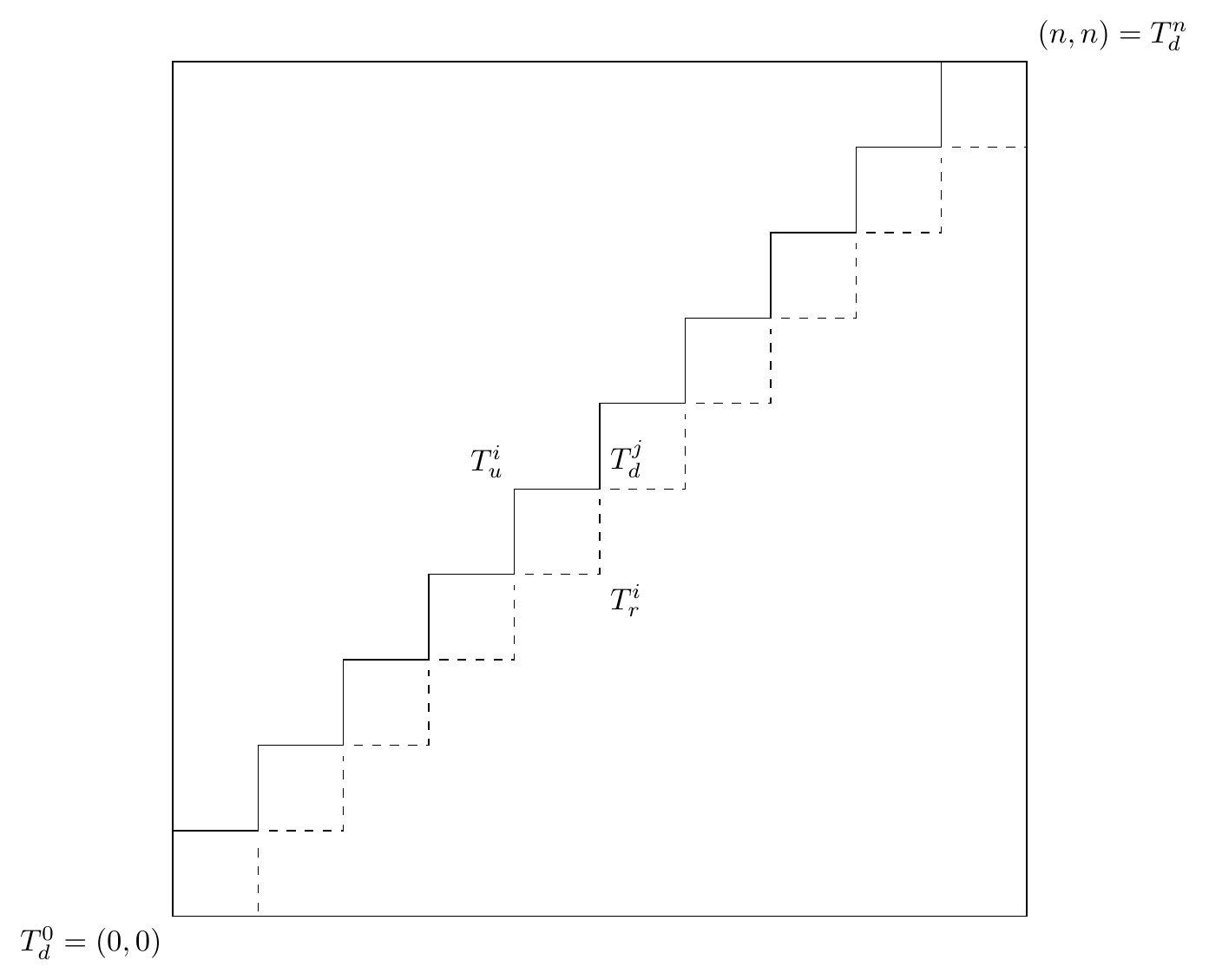}
\par\end{centering}
\caption{}

\label{fig:nonflat}
\end{figure}
Any up-right path on this block goes either up-right or right-up.
Denote by $T_{u}^{i}$ the weight associated with the vertex at the
upper-left corner of the $ith$ $1\times1$ diagonal block, while
$T_{r}^{i}$ is the weight associated with the corresponding lower-right
corner for $i\in[n-1]$ and $T_{d}^{j}$ is the weight associated
with the vertex on the diagonal for $j\in[n]$. Then, all these $3n$
random weights are i.i.d.~Bernoulli random variables with parameter
$s$. Moreover, the maximal passage time of all the paths going inside
these blocks is a lower bound for the last passage time, i.e.,
\[
T(n,n)\ge\sum_{j=1}^{n}T_{d}^{j}+\sum_{i=0}^{n-1}\left(T_{u}^{i}\vee T_{r}^{i}\right).
\]
Hence, 
\begin{align*}
g(1,1) & \ge\lim_{n\rightarrow\infty}\frac{1}{n}\mathbb{E}\left(\sum_{j=1}^{n}T_{d}^{j}+\sum_{i=0}^{n-1}\left(T_{u}^{i}\vee T_{r}^{i}\right)\right)\\
 & \ge\lim_{n\rightarrow\infty}\frac{1}{n}\left(ns+n(1-(1-s)^{2})\right)\\
 & =3s-s^{2}.
\end{align*}
\end{proof}
An explicit expression for $g$ is known for geometric or exponential
weights but not for Bernoulli weights (e.g., see \cite{seppalainen1996hydrodynamic,1998math......1068J,rost1981non}).
So to obtain a specific threshold $t$, as described above, we combine
the lower bound on $g_{\bot}(0)$ obtained in Lemma \ref{lem:.lbg}
with an upper bound on $g$ obtained in \cite{martin2004limiting}.
\begin{prop}
\label{prop:nonflatterninterval}Let $t=1-(g(1,1)-2s)^{2}/8s(1-s)<1-s(1-s)/8$.
Then, for any $q\in(-1,-t)\cup(t,1)$, $g_{\perp}(q)\lneq g_{\perp}(0)=g(1,1)$.
\end{prop}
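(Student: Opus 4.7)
The plan is to combine the lower bound $g(1,1)\ge 3s-s^{2}$ from Lemma~\ref{lem:.lbg} with the matching Bernoulli upper bound of \cite{martin2004limiting},
\[
g(x,y) \;\le\; s(x+y) + 2\sqrt{s(1-s)\,xy},
\]
specialized along the line $(x,y)=(1-q,1+q)$. This immediately yields the explicit one-parameter bound
\begin{equation}
g_{\perp}(q) \;\le\; 2s + 2\sqrt{s(1-s)(1-q^{2})}, \qquad q\in(-1,1), \label{eq:gperpUB-plan}
\end{equation}
whose right-hand side correctly matches the boundary value $2s$ at $q=\pm1$.

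Write $\Delta:=g(1,1)-2s$, which is strictly positive by Lemma~\ref{lem:.lbg}. I would then ask for which values of $q$ the explicit bound \eqref{eq:gperpUB-plan} already falls below $g(1,1)$. A direct algebraic manipulation shows this happens precisely when
\[
q^{2} \;>\; q_{*}^{2} \;:=\; 1-\frac{\Delta^{2}}{4s(1-s)}.
\]
To relate the true critical value $q_{*}$ to the threshold $t$ appearing in the statement, I would invoke the elementary inequality $\sqrt{1-u}\le 1-u/2$, valid for $u\in[0,1]$ and strict for $u>0$, applied with $u=\Delta^{2}/(4s(1-s))$:
\[
q_{*} \;<\; 1-\frac{\Delta^{2}}{8s(1-s)} \;=\; t.
\]
Consequently, any $q$ with $|q|>t$ satisfies $|q|>q_{*}$, and \eqref{eq:gperpUB-plan} then gives $g_{\perp}(q)<g(1,1)=g_{\perp}(0)$, which is the assertion of the proposition.

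The numerical side-inequality $t<1-s(1-s)/8$ then follows by substituting $\Delta\ge s(1-s)$ from Lemma~\ref{lem:.lbg} into the defining formula for $t$; strictness is inherited from the diagonal-block argument used to prove Lemma~\ref{lem:.lbg}, which is manifestly lossy (all contributions from off-diagonal vertices have been discarded), so the inequality for $\Delta$ is in fact strict as soon as $n\ge 2$.

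The only delicate point is to verify that the particular form of the Bernoulli upper bound quoted from \cite{martin2004limiting} has constants aligned so that its intersection with the horizontal line $y=g(1,1)$ produces exactly the expression $1-\Delta^{2}/(4s(1-s))$; once that is confirmed, everything else is a short elementary calculation, and no new probabilistic input is needed.
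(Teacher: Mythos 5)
Your overall strategy---playing an explicit upper bound on $g_{\perp}(q)$ derived from Martin's Lemma 4.1 against the lower bound $g(1,1)-2s\ge s(1-s)$ of Lemma \ref{lem:.lbg}, and locating the value of $|q|$ at which the upper bound falls below $g(1,1)$---is exactly the paper's. The gap is the specific bound you start from. You posit $g(x,y)\le s(x+y)+2\sqrt{s(1-s)\,xy}$, hence $g_{\perp}(q)\le 2s+2\sqrt{s(1-s)(1-q^{2})}$. But the bound actually available (Lemma 4.1 of \cite{martin2004limiting}, as quoted in the paper) is $g(1,y)\le(1+y)s+2\sqrt{y(1+y)}\sqrt{s(1-s)}$; homogenizing via $g_{\perp}(q)=(1+q)\,g\bigl(1,\tfrac{1-q}{1+q}\bigr)$ for $q\ge 0$ gives $g_{\perp}(q)\le 2s+2\sqrt{2(1-q)}\sqrt{s(1-s)}$, i.e.\ the cross term carries $\sqrt{\min(x,y)\,(x+y)}$ rather than $\sqrt{xy}$, which is strictly weaker than your version on $(0,1)$. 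Your inequality is not implied by the cited lemma and is not established anywhere in the paper (it may be true, but it would require a separate proof), so the ``delicate point'' you defer is precisely where the argument as written breaks: with the correct bound, the intersection of the upper envelope with the level $g(1,1)$ does not occur at $q_{*}=\sqrt{1-\Delta^{2}/(4s(1-s))}$.

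The repair is immediate and in fact shortens your argument, reproducing the paper's proof: setting $2s+2\sqrt{2(1-q)}\sqrt{s(1-s)}=g(1,1)$ gives $1-q=\Delta^{2}/(8s(1-s))$, so the critical value is \emph{exactly} the $t$ of the statement; since this upper bound is strictly decreasing in $q$ on $(0,1)$, one gets $g_{\perp}(q)<g(1,1)$ for all $q\in(t,1)$, and the symmetry $g_{\perp}(-q)=g_{\perp}(q)$ handles $q\in(-1,-t)$. No concavity step such as $\sqrt{1-u}\le 1-u/2$ is needed. One further small point: from $\Delta\ge s(1-s)$ one only obtains $t\le 1-s(1-s)/8$; your claim that strictness follows ``for $n\ge 2$'' from the lossiness of the diagonal-block construction does not hold as stated, since that construction only controls the limit $g(1,1)$, not a strict finite-$n$ surplus surviving the limit (the paper's own proof has the same non-strict conclusion).
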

\begin{proof}
First by Lemma $4.1$ in \cite{martin2004limiting},
\[
g(1,y)\le(1+y)s+2\sqrt{y(1+y)}\sqrt{s(1-s)}.
\]
Without loss of generality, assume $q>0$, thus
\begin{eqnarray}
g_{\perp}(q) & = & (1+q)g\left(\frac{1-q}{1+q},1\right)\nonumber \\
 & = & (1+q)g(1,\frac{1-q}{1+q})\nonumber \\
 & \le & (1+q)\left(\frac{2}{1+q}s+2\sqrt{\frac{2(1-q)}{(1+q)^{2}}}\sqrt{s(1-s)}\right)\nonumber \\
 & = & 2s+2\sqrt{2-2q}\sqrt{s(1-s)}.\label{eq:upbdg}
\end{eqnarray}

When $t=1-(g(1,1)-2s)^{2}/8(1-s)s$, the upper bound on $g_{\perp}(q)$
given in (\ref{eq:upbdg}) is equal to $g_{\perp}(0)=g(1,1)$. Moreover,
by Lemma \ref{lem:.lbg}, 
\[
g(1,1)-2s=g_{\perp}(0)-2s\ge s(1-s),
\]
 which implies that $t\le1-s(1-s)/8$.
\end{proof}
It is commonly believed that, for Bernoulli weights, with sufficiently
small parameter, $g$ is strictly concave. In our setting instead,
and in order to obtain our main result, the finiteness of the curvature
power in the $(1,1)$ direction is imposed. This assumption is stronger
than strict concavity since the curvature power of the shape function
is defined as:
\begin{defn}
The shape function $g$ is said to have curvature power $\kappa(\textbf{e})$
at $\textbf{e\ \ensuremath{\in\mathbb{R}}}^{+}\times\mathbb{R}^{+}$,
if $g$ is differentiable at $\text{\textbf{e}}$ and there exists
$\delta>0$, such that for any $\textbf{z\ \ensuremath{\in\mathbb{R}\times\mathbb{R}}}$
such that $|\textbf{z}|<\delta$ and $\textbf{z}+\textbf{e}/g(\textbf{e})\in L(\textbf{e})$,
where $L(\textbf{e})$ is a supporting line for $g$ at $\textbf{e}$,
\[
c|\textbf{z}|^{\kappa(\textbf{e})}\le|g(\textbf{e}+\textbf{z})-g(\textbf{e})|\le C|\textbf{z}|^{\kappa(\textbf{e})},
\]
for some positive constants $c$ and $C$ depending only on $\delta$.
Otherwise, if $g$ is not differentiable at $\textbf{e}$, set $\kappa(\textbf{e})=1$.
Hereafter, $\kappa\ is\ short\ for\ \kappa((1,1))$. 

By symmetry, the supporting line $L(1,1)$ is in the direction of
$(1,-1)$. Hence, the definition of $\kappa$ is equivalent to the
fact that there exists $\delta>0$, such that for any $\textbf{z\ \ensuremath{\in\mathbb{R}\times\mathbb{R}}}$
satisfying $\textbf{z}\cdot(1,1)=0$ and $|\textbf{z}|<\delta$,
\[
c|\textbf{z}|^{\kappa}\le|g((1,1)+\textbf{z})-g(1,1)|\le C|\textbf{z}|^{\kappa},
\]
for some positive constants $c$ and $C$ depending only on $\delta$.
Then, requiring that $1\le\kappa<+\infty$ is in turn equivalent to:
there exists $\delta>0$ such that for any $q\in(-1,1)$ with $|q|<\delta$,
\begin{equation}
c|q|^{\kappa}\le|g_{\perp}(q)-g_{\perp}(0)|\le C|q|^{\kappa},\label{eq:assumgperp}
\end{equation}
for some positive constants $c$ and $C$, depending only on $\delta$.
\end{defn}
As already indicated, it is believed (e.g., see \cite{kesten1986aspects})
that when $s<s_{c}$, where $s_{c}$ is the critical probability for
the directed last passage percolation with i.i.d.~Bernoulli weights,
the shape function $g$ has curvature power $\kappa=2$. But as mentioned
before Lemma \ref{lem:.lbg}, in FPP, a class of weights has been
shown to be such that $g$ has flat edges, i.e., there exist infinitely
many $\textbf{e}$ such that $\kappa(\textbf{e})=\infty$ (see \cite{durrett1981shape}).
It was first proved in \cite{newman1995divergence} that there exists
$q\in(-1,1)$ such that the lower inequality in (\ref{eq:assumgperp})
holds when $\kappa(1-q,1+q)=2$, while \cite{chatterjee2011universal}
shows that there is a (possibly different) $q$ at which the upper
inequality in  (\ref{eq:assumgperp}) holds when $\kappa(1-q,1+q)=2$.
To finish this section, we state the main result of this paper.
\begin{thm}
\label{thm:mainmain} Let the curvature power $\kappa$ of the shape
function $g$ at $(1,1)$ be such that $1\le\kappa<+\infty$. Then,
in a $n\times n$ grid, with probability exponentially close to $1$,
all the geodesics are within the cylinder, centered on the main diagonal
and of width $O(n^{\frac{2\kappa+2}{2\kappa+3}}\sqrt{\ln n})$.
\end{thm}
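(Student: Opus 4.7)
The plan is to sharpen the qualitative control from Theorem~\ref{thm:eventa}---which, via Proposition~\ref{prop:nonflatterninterval}, already confines geodesics to a sector of bounded slope---into the quantitative concentration asserted in Theorem~\ref{thm:mainmain}. The underlying mechanism is the classical curvature-versus-fluctuation balance: the curvature estimate \eqref{eq:assumgperp} penalises any excursion of the geodesic away from the diagonal by a deterministic deficit in expected last-passage time, and this deficit must be absorbed by random fluctuations whose scale is controlled via the skew-block decomposition of Section~\ref{Sec:skewblocks}.

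Concretely, I would fix a candidate width $W$ and construct two bounding lines $\ell_1,\ell_2$ through $(0,0)$ and $(n,n)$ of slopes $1\pm cW/n$. By horizontal/vertical symmetry it suffices to bound the probability that some geodesic crosses the upper line $\ell_1$ at a lattice vertex $V$. On the event $\{V\in\pi\}$ we split
\[
T(n,n)=T((0,0),V)+T(V,(n,n)),
\]
apply \eqref{eq:assumgperp} to the two tilted rectangles $(0,0)\to V$ and $V\to(n,n)$ (each of aspect ratio of order $W/n$), and conclude that the expectation of the split is smaller than $n\,g(1,1)$ by a deterministic amount of order $W^{\kappa}/n^{\kappa-1}$. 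Combined with the fact that any diagonal-hugging path has expected weight close to $n\,g(1,1)$, up to the intermediate rate-of-convergence correction of Section~\ref{Sec:skewblocks}, the event $\{V\in\pi\}$ is forced to imply that the random fluctuation of at least one of the two split passage times exceeds a threshold of order $W^{\kappa}/n^{\kappa-1}$.

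The second ingredient is a sub-Gaussian tail bound for these fluctuations. The skew-block decomposition represents the passage time on a tilted rectangle as a sum of contributions from approximately square sub-blocks, so a bounded-difference or martingale concentration inequality yields tails of the form $\exp(-ct^{2}/\sigma_{n}^{2})$, where $\sigma_{n}$ is the fluctuation scale furnished by the intermediate convergence rate. A union bound over the $O(n)$ admissible crossing vertices $V$ then produces a failure probability of order $n\,\exp\!\bigl(-c\,(W^{\kappa}/n^{\kappa-1})^{2}/\sigma_{n}^{2}\bigr)$, and choosing $W$ so that $W^{\kappa}/n^{\kappa-1}$ equals $\sigma_{n}\sqrt{\ln n}$ produces the target $W\asymp n^{(2\kappa+2)/(2\kappa+3)}\sqrt{\ln n}$; the precise exponent emerges as the solution of this balance, and the $\sqrt{\ln n}$ factor absorbs the union-bound cost. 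Symmetrising handles the lower line $\ell_{2}$ and completes the proof.

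The main obstacle is producing an intermediate rate-of-convergence estimate for $\sigma_{n}$ that is strong enough to drive the balance, since no exact formula for the Bernoulli shape function is available. The skew-block construction in Section~\ref{Sec:skewblocks} is designed precisely for this: by showing that an optimal decomposition uses mostly near-square blocks, one can iterate a Fekete/superadditivity-type argument to extract a polynomial rate, weaker than the conjectured $O(n^{1/3})$ but sufficient here. A secondary delicate point is the tuning of the slopes $\pm cW/n$: they must stay inside the neighborhood on which \eqref{eq:assumgperp} applies, while still letting the aspect ratio $W/n$ generate enough deficit to dominate $\sigma_{n}\sqrt{\ln n}$. With both pieces in place, the transversal fluctuation exponent of Theorem~\ref{thm:mainmain} falls out of a straightforward optimisation.
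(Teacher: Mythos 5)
Your proposal and the paper prove the same statement by genuinely different mechanisms, and in addition your final balance computation does not produce the exponent you claim it does.

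The paper does not argue via crossing vertices or a curvature--fluctuation tradeoff applied to a single split point. Instead, it divides the grid into $m$ vertical strips of width $k$, considers the (random) decomposition $\overrightarrow{r}$ produced by a geodesic, and shows (Theorem \ref{thm:eventa}) that with exponentially high probability any optimal decomposition lies in the deterministic set $R_{\eta,p_1,p_2}$: at most $\eta m$ of the $m$ blocks have aspect ratio outside $[p_1,p_2]$. A purely combinatorial argument (the three cases on $x$ in the proof of Theorem \ref{thm:mainmain}) then confines the geodesic between the lines $\ell_1:y=p_1x-p_1n\eta-p_1k$ and $\ell_2:y=p_2x+p_2n\eta+p_2k$. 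The curvature assumption enters only through $\delta^*=g_{\leftthreetimes}(1)-g_{\leftthreetimes}(1\pm n^{-\beta})\asymp n^{-\kappa\beta}$ in Lemma \ref{lem:linske}, and the exponent $(2\kappa+2)/(2\kappa+3)$ comes from simultaneously minimising the three contributions $n^{1-\beta}$, $n^{1+\kappa\beta-\alpha/2}\sqrt{\ln n}$ and $n^{\alpha}$ (intercept from slope, intercept from $n\eta$, intercept from $k$) under the constraint that the union bound over decompositions still closes. Nothing in the paper plays the role of your ``crossing vertex'' $V$ or of a per-vertex deficit $W^{\kappa}/n^{\kappa-1}$.

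Beyond the structural difference, there is a concrete error in your final step. You propose to choose $W$ from $W^{\kappa}/n^{\kappa-1}\asymp\sigma_n\sqrt{\ln n}$ and assert that this produces $W\asymp n^{(2\kappa+2)/(2\kappa+3)}\sqrt{\ln n}$. Solving for $\sigma_n$ in that identity forces $\sigma_n\asymp n^{(\kappa+3)/(2\kappa+3)}(\ln n)^{(\kappa-1)/2}$, which for $\kappa\ge 1$ exceeds $\sqrt{n}$; there is no available concentration estimate with a tail scale of that order, and no reason to want one. If instead you plug in the scale that the paper's own Lemma \ref{lem:revdiag} plus Hoeffding's martingale inequality actually furnishes, namely $\sigma_n\asymp\sqrt{n}$ (and note that the Proposition \ref{prop:rateofconv} correction is also $O(\sqrt{n\ln n})$, of the same order), the balance becomes $W^{\kappa}/n^{\kappa-1}\asymp\sqrt{n\ln n}$, i.e.\ $W\asymp n^{(2\kappa-1)/(2\kappa)}(\ln n)^{1/(2\kappa)}$. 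This is the Newman--Piza-type exponent, and it is \emph{strictly smaller} than $(2\kappa+2)/(2\kappa+3)$ for every $\kappa\ge 1$; the two agree only in the limit $\kappa\to\infty$. In other words, the crossing-vertex argument you sketch, if completed carefully, would prove something sharper than the stated theorem, and it certainly does not yield the stated exponent via the balance you write.

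A further conceptual slip is that you treat ``the intermediate rate-of-convergence estimate'' as furnishing $\sigma_n$. Proposition \ref{prop:rateofconv} bounds the \emph{bias} $g_{\leftthreetimes}(1)-\mathbb{E}T_n/n=O(\sqrt{\ln n/n})$; it is not a variance or sub-Gaussian-scale estimate. The actual fluctuation scale in this model comes from the reversed-diagonal batch martingale (Lemma \ref{lem:revdiag} with Hoeffding) and is $\Theta(\sqrt{n})$. Both quantities are relevant, and it is correct that the bias error must be subsumed into the deviation threshold, but they should not be identified. Finally, two minor technical points you flag but do not resolve: (i) the curvature bound \eqref{eq:assumgperp} only applies for $|q|<\delta$, so for crossing vertices near the corners you need a separate linear-deficit estimate (Proposition \ref{prop:nonflatterninterval} and concavity supply this); and (ii) the admissible crossing vertices number $O(n^2)$, not $O(n)$, though this only costs a constant in the logarithm.
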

As far as notations are concerned, and as usual, $a_{n}=O(b_{n})$
is short for there exists a positive constant $C$ such that $|a_{n}|\le C|b_{n}|$,
for $n$ large enough; $a_{n}=\Theta(b_{n})$ is short for there exist
$0<c<C<+\infty$ such that $cb_{n}\le a_{n}\le Cb_{n}$, for $n$
large enough; $a_{n}=\Omega(b_{n})$ is short for there exists a constant
$K>0$ such that $a_{n}\ge Kb_{n}$, for $n$ large enough and finally,
$a_{n}=o(b_{n})$ is short for $\lim_{n\rightarrow+\infty}\lvert a_{n}\rvert/\lvert b_{n}\rvert=0$.

\section{Proof of Main Results\label{Sec:skewblocks}}

In this section we start by introducing our main tools, i.e., decompositions
and blocks, and then prove some concentration results which are further
related to the concentration of geodesics needed to obtain our main
result.

\subsection{Blocks, Decompositions and Concentration}

Throughout the rest of this manuscript, let $n=mk$ so that the $x$-axis
of the grid is divided into $m$ segments each of equal length $k$.
Meanwhile, the $y$-axis of the grid is also divided into $m$ segments.
The $(m+1)$-tuples $\overrightarrow{r}=(r_{0},r_{1},...,r_{m})$
made of the end points of these consecutive segments on the $y$-edges,
i.e.,
\[
r_{0}=0\le r_{1}\le r_{2}\le...\le r_{m-1}\le r_{m}=n,
\]
 is called a $decomposition$ of the $y$-axis. This decomposition
leads to a decomposition of the grid into $m$ rectangular blocks,
of which the $ith$ ($i=1,2,...,m)$ block has lower-left corner $((i-1)k,r_{i-1})$
and upper-right corner $(ik,r_{i})$, and is of size $k\times(r_{i}-r_{i-1})$.
Moreover, the last passage time associated with the decomposition
$\overrightarrow{r}$ is defined as the summation over all the $m$
last passage times in these $m$ blocks, i.e.,
\[
T_{n}(\overrightarrow{r})=\sum_{i=1}^{m}T(((i-1)k,r_{i-1}),(ik,r_{i})).
\]
By superadditivity, it is clear that $T(n,n)\ge T_{n}(\overrightarrow{r})$.
Moreover, as explained next, there always exists a decomposition $\overrightarrow{r_{*}}$
such that $T(n,p_{0}n)=T_{n}(\overrightarrow{r_{*}})$, and such a
decomposition is called $optimal$. Indeed, one can construct an $optimal$
decomposition $\overrightarrow{r_{*}}$ by taking vertices $(ik,r_{i})$,
$i=0,1,...,m$, on a geodesic for the entire $n\times n$ grid. Heuristically,
any optimal decomposition $\overrightarrow{r_{*}}$ should, roughly,
be evenly distributed over $n$, i.e., all the $m$ blocks in any
optimal decomposition should be mostly square shaped at least with
high probability. To be more precise, let us fix $0<\eta<1$ and $p_{i}>0$
($i=1,2$) such that $0<p_{1}<1<p_{2}$. Let $R_{\eta,p_{1},p_{2}}$
be the deterministic set of decompositions $\overrightarrow{r}$ such
that 
\begin{equation}
\#\{i\in[m]:\ kp_{1}\le r_{i}-r_{i-1}\le kp_{2}\}\ge(1-\eta)m,\label{eq:defeventR}
\end{equation}
in words $R_{\eta,p_{1},p_{2}}$ represents the decompositions having
a proportion of at least $(1-\eta)$ of those $m$ blocks close-to-square
shaped, i.e., the decompositions for which the slope of the block
diagonal is close to $1$, i.e., the non-skewed decompositions. Finally,
let $A_{\eta,p_{1},p_{2}}^{n}$ be the event that all the optimal
decompositions are in $R_{\eta,p_{1},p_{2}}$, i.e., if $\overrightarrow{r}$
is optimal, then 
\begin{equation}
\overrightarrow{r}\in R_{\eta,p_{1},p_{2}}.\label{eq:defeventA}
\end{equation}

Next, we show a lemma asserting that for any decomposition $\overrightarrow{r}=(r_{0},r_{1},...,r_{m})\in R_{\eta,p_{1},p_{2}}^{c}$,
the difference between the expected overall last passage time and
the expected last passage times associated with $\overrightarrow{r}$
is at least linear in $n$. Before proving it, it is shown that even
when the vertex weights, belonging to a particular set to be specified,
are independently resampled, the absolute change in the last passage
time can be upper-bounded by $1$. To specify such a set, declare
the vertices $\{V_{i}=(X_{i},Y_{i})\}_{i=1}^{k}$ to be strictly decreasing,
if there exists a permutation $\pi$ of $\{1,2,...,k\}$ such that
\[
V_{\pi(1)}\prec V_{\pi(2)}\prec...\prec V_{\pi(k)},
\]
where $V_{i}\prec V_{j}$ indicates that both $X_{i}<X_{j}$ and $Y_{i}<Y_{j}$.
For example, on a $n\times n$ grid, the set of all the vertices on
the reversed diagonal, i.e., $\{(n-i,i)\}_{i=0}^{n}$ is a strictly
decreasing set and its cardinality is $n+1$.
\begin{lem}
\label{lem:revdiag} Let a rectangular grid have lower-left vertex
$V_{1}$ and upper-right vertex $V_{2}$ and let $S$ be a strictly
decreasing set of vertices on the grid. Then, the absolute difference
between the last passage times in the original weights setting and
in the modified weights setting, where the weights on $S$ are independently
resampled, is upper bounded by $1$, i.e.,
\[
|T(V_{1},V_{2})-T^{S}(V_{1},V_{2})|\le1,
\]
where $T(V_{1},V_{2})$ and $T^{S}(V_{1},V_{2})$ are respectively
the last passage times before and after resampling.
\end{lem}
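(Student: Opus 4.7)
The plan is to exploit the observation that a \emph{strictly decreasing} set of vertices (as the reversed-diagonal example makes clear) is an antichain in the product order on the grid: any two vertices in $S$ are incomparable, one having strictly larger $x$-coordinate and strictly smaller $y$-coordinate than the other. Consequently, any unit-step up-right path from $V_1$ to $V_2$ can meet $S$ in at most one vertex, since every pair of vertices on such a path is comparable in the product order.

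With this geometric observation in hand, the argument is a standard ``path-swap'' comparison. First, I would fix a path $\pi \in \Pi(V_1,V_2)$ and note that its total weight under the resampled configuration differs from its total weight under the original configuration only in the (at most one) vertex of $\pi \cap S$. Since all vertex weights are Bernoulli $\{0,1\}$, the per-vertex change is at most $1$ in absolute value, hence
\[
\Bigl|\sum_{v \in \pi \setminus \{V_1\}} w(v) - \sum_{v \in \pi \setminus \{V_1\}} w^S(v)\Bigr| \le 1,
\]
where $w^S$ denotes the weights after resampling on $S$.

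Next, I would promote this pointwise bound to the maxima. Let $\pi^*$ be a geodesic for the original weights, so that $T(V_1,V_2) = \sum_{v \in \pi^* \setminus \{V_1\}} w(v)$. Then
\[
T^S(V_1,V_2) \ge \sum_{v \in \pi^* \setminus \{V_1\}} w^S(v) \ge T(V_1,V_2) - 1.
\]
The symmetric inequality $T(V_1,V_2) \ge T^S(V_1,V_2) - 1$ follows by swapping the roles of $w$ and $w^S$ (choosing a geodesic for the resampled configuration), which yields $|T(V_1,V_2) - T^S(V_1,V_2)| \le 1$.

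There is no serious obstacle here; the only non-trivial point is recognizing that ``strictly decreasing'' is synonymous with ``antichain under the up-right order,'' so that the intersection of any directed path with $S$ has cardinality at most one. Once that combinatorial fact is in place, the Bernoulli range of the weights turns it immediately into the claimed $1$-Lipschitz bound on $T$ under resampling of $S$.
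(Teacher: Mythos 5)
Your proposal is correct and follows essentially the same route as the paper: the key observation that a strictly decreasing set is an antichain, so any up-right path meets $S$ in at most one vertex, followed by a per-path weight comparison promoted to the maxima via a geodesic (you start from a geodesic for the original weights, the paper from one for the resampled weights — a purely symmetric difference). No gaps.
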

\begin{proof}
Let $\Pi$ be the set of all up-right paths from $V_{1}$ to $V_{2}$.
Since $S$ is a set of strictly decreasing vertices, for any path
$\pi\in\Pi$ viewed as a set of vertices, the intersection between
$\pi$ and $S$ is either empty or contains exactly one element, i.e.,
$\#(\pi\cap S)\le1$. Thus,
\[
T_{\pi}^{S}(V_{1},V_{2})-T_{\pi}(V_{1},V_{2})\le1,
\]
where the upper bound is $1$ if and only if there is a vertex $v\in\pi\cap S$
such that $w(v)=0$ and $w^{S}(v)=1$. Let $\pi_{*}^{S}$ be a geodesic
after resampling $S$, i.e., $T_{\pi_{*}^{S}}^{S}(V_{1},V_{2})=T^{S}(V_{1},V_{2})$.
It follows that
\begin{eqnarray*}
T^{S}(V_{1},V_{2}) & = & T_{\pi_{*}^{S}}^{S}(V_{1},V_{2})\\
 & \le & T_{\pi_{*}^{S}}(V_{1},V_{2})+1\\
 & \le & \max_{\pi\in\Pi}T_{\pi}(V_{1},V_{2})+1\\
 & \le & T(V_{1},V_{2})+1.
\end{eqnarray*}
Symmetrically, $T(V_{1},V_{2})-T^{S}(V_{1},V_{2})\le1$ and thus 
\[
|T(V_{1},V_{2})-T^{S}(V_{1},V_{2})|\le1.
\]
\end{proof}
For further convenience, we introduce a transformed shape function
$g_{\leftthreetimes}$ which depends on the slope of the main diagonal
of the grid. Specifically, for $p>0$, set
\[
g_{\leftthreetimes}(p):=\lim_{n\rightarrow\infty}\frac{\mathbb{E}T(n,np)}{n(1+p)/2}.
\]
Now, recalling that $g_{\perp}:\ q\in(-1,1)\rightarrow g_{\perp}(q)\in(0,\infty)$
is defined via
\[
g_{\perp}(q)=g(1-q,1+q)=\lim_{n\rightarrow\infty}\frac{\mathbb{E}T(n-nq,n+nq)}{n},
\]
it is clear that
\[
g_{\leftthreetimes}(p)=g_{\perp}\left(\frac{p-1}{p+1}\right),
\]
for $p\in(0,+\infty)$. To prove a result showing that the difference
of the expectations is at least linear in $n$, in addition to Lemma
\ref{lem:revdiag}, a rate of convergence result for $\mathbb{E}T_{n}/n$
is also needed. This is stated and proved next, with a proof adapted
from \cite{rhee1995rates}.
\begin{prop}
\label{prop:rateofconv}$0\le g_{\leftthreetimes}(1)-\mathbb{E}T_{n}/n\le c\sqrt{\ln n/n}$,
where $c>0$ is an absolute constant.
\end{prop}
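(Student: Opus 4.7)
The lower bound $0\le g_\leftthreetimes(1)-\mathbb{E}T_n/n$ is immediate from superadditivity: concatenating up-right paths through the intermediate vertex $(n,n)$ in the $(n+m)\times(n+m)$ grid gives $\mathbb{E}T_{n+m}\ge\mathbb{E}T_n+\mathbb{E}T_m$, so Fekete's lemma forces $\mathbb{E}T_n/n$ to be non-decreasing with limit $g_\leftthreetimes(1)=\sup_k\mathbb{E}T_k/k$. All of the content therefore lies in the matching upper estimate $g_\leftthreetimes(1)-\mathbb{E}T_n/n\le c\sqrt{\ln n/n}$.

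The plan, adapted from \cite{rhee1995rates}, is to combine sharp Gaussian concentration of $T_n$ around its mean with the pointwise superadditivity of last-passage percolation. For the concentration step I would invoke Lemma~\ref{lem:revdiag}: the $(n+1)^2$ vertices of the grid partition into the $2n+1$ anti-diagonals $D_j=\{(i,j-i):\max(0,j-n)\le i\le\min(n,j)\}$, each of which is a strictly decreasing set. Resampling the weights along any single $D_j$ therefore alters $T_n$ by at most $1$, and McDiarmid's bounded-differences inequality applied to these $2n+1$ independent groups of Bernoulli weights yields the Gaussian tail bound
$$\mathbb{P}\bigl(|T_n-\mathbb{E}T_n|\ge t\bigr)\le 2\exp\bigl(-2t^2/(2n+1)\bigr),$$
so that $T_n$ fluctuates around $\mathbb{E}T_n$ on scale $\sqrt{n\ln n}$ with polynomial-in-$n$ probability.

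To convert this concentration into a rate I would compare $T_n$ with $T_N$ on the $N\times N$ grid for $N=Kn$, where $K$ is chosen of order $\ln n$. Along the diagonal of the $N\times N$ grid one can fit $K$ disjoint $n\times n$ sub-blocks whose corner-to-corner last-passage times $T_n^{(1)},\dots,T_n^{(K)}$ are i.i.d.\ copies of $T_n$, giving the pointwise superadditive inequality $T_N\ge\sum_{i=1}^K T_n^{(i)}$. The plan is then to apply the Gaussian concentration simultaneously to $T_N$ (using the a priori $\mathbb{E}T_N\le Ng_\leftthreetimes(1)$) and to the i.i.d.\ sum (whose variance is $O(N)$ by independence), and to combine these with the shape-theorem convergence $\mathbb{E}T_N/N\to g_\leftthreetimes(1)$ to squeeze out the desired estimate $\mathbb{E}T_n\ge ng_\leftthreetimes(1)-O(\sqrt{n\ln n})$. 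The main obstacle is precisely this last balancing step: superadditivity alone delivers only the easy direction $\mathbb{E}T_n\le ng_\leftthreetimes(1)$, so the matching lower bound must genuinely be driven by the Gaussian tails from Lemma~\ref{lem:revdiag}. Calibrating the three error scales --- the fluctuation of $T_N$ around $\mathbb{E}T_N$, the fluctuation of $\sum_i T_n^{(i)}$ around $K\mathbb{E}T_n$, and the shape-theorem slack $g_\leftthreetimes(1)-\mathbb{E}T_N/N$ --- so that they conspire to give $\sqrt{\ln n/n}$ rather than bleed an extra logarithmic factor is the delicate bookkeeping that Rhee's scheme handles.
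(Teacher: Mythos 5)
Your concentration step is sound and matches the paper's: the anti-diagonals $D_j$ are strictly decreasing sets, so by Lemma~\ref{lem:revdiag} resampling one $D_j$ changes the relevant last-passage time by at most $1$, and Hoeffding's (McDiarmid's) inequality over the $O(n)$ independent batches yields a Gaussian tail on scale $\sqrt{n}$. The lower bound $0\le g_\leftthreetimes(1)-\mathbb{E}T_n/n$ is also fine, though ``non-decreasing'' is not justified by Fekete (which gives $\lim=\sup$, not monotonicity of the averages).

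The gap is in how you propose to convert concentration into a convergence rate. Your only structural comparison between $T_N$ and $T_n$ is the superadditive lower bound $T_N\ge\sum_{i=1}^K T_n^{(i)}$ using the $K$ \emph{diagonal} $n\times n$ sub-blocks. Combined with concentration of both sides, this inequality can only reproduce the trivial direction $\mathbb{E}T_n/n\le g_\leftthreetimes(1)$: it never rules out that $T_N$ is much larger than the diagonal-block sum, so it cannot produce the needed inequality $g_\leftthreetimes(1)\le\mathbb{E}T_n/n+O(\sqrt{\ln n/n})$. What the paper does, and what your plan is missing, is an \emph{upper} bound on $T_{kn}$ built from $n$-scale quantities. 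The paper shows that $T_{kn}=\max_{\overrightarrow{V}}T(\overrightarrow{V})$, where the max runs over \emph{all} $(k+1)$-point partitions $\overrightarrow{V}$ with equal $\ell_1$-spacing $\|V_{i}-V_{i+1}\|_1=2n$ (not just the diagonal one); crucially the blocks may be rectangular. Each term has $\mathbb{E}T(V_i,V_{i+1})=\mathbb{E}T(a,2n-a)\le\tfrac12\mathbb{E}T_{2n}$ by superadditivity through $(a,2n-a)$ on the $2n\times 2n$ grid and the $x\leftrightarrow y$ symmetry, so every partition sum satisfies $\mathbb{E}T(\overrightarrow{V})\le \tfrac{k}{2}\mathbb{E}T_{2n}$. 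Now the Hoeffding bound is applied to each fixed $T(\overrightarrow{V})$ (giving tail $\exp(-t^2kn)$), and a union bound over the roughly $\exp(ck\ln n)$ partitions, with $t\asymp\sqrt{\ln n/n}$, yields $\mathbb{E}T_{kn}/(kn)\le\mathbb{E}T_{2n}/(2n)+O(\sqrt{\ln n/n})$; letting $k\to\infty$ finishes. So the three ingredients your write-up omits, and which are the genuine content of Rhee's scheme, are: the representation of $T_{kn}$ as a max over \emph{all} $\ell_1$-equispaced partitions, the symmetry bound $\mathbb{E}T(a,2n-a)\le\tfrac12\mathbb{E}T_{2n}$ for the non-square blocks, and the union bound over the $\exp(O(k\ln n))$ partitions that fixes the $\sqrt{\ln n/n}$ rate. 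Without these, your comparison only runs in the easy direction and the argument does not close.
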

\begin{proof}
Consider the last passage time $T_{kn}$ of site percolation on a
$kn\times kn$ grid. A sequence of vertices $\overrightarrow{V}=(V_{1}=(X_{1}=0,Y_{1}=0),\ V_{2},...,\ V_{k}=(X_{k}=kn,Y_{k}=kn))$
is called a $partition$ of the grid, if
\begin{align}
0=X_{1}\le X_{2}\le...\le X_{k} & =kn,\label{eq:partition_def_1}\\
0=Y_{1}\le Y_{2}\le...\le Y_{k} & =kn,\label{eq:partition_def_2}\\
||V_{i}-V_{i+1}||_{1} & =2n,\label{eq:partition_def_3}
\end{align}
where $||\cdot||_{1}$ denotes the $\ell_{1}$-distance. Further,
let the last passage time associated with some partition $\overrightarrow{V}$
be 
\[
T(\overrightarrow{V})=\sum_{i=0}^{k-1}T(V_{i},V_{i+1}).
\]
Then, as proved next,
\begin{equation}
T_{kn}=\max_{partitions\ \overrightarrow{V}}T(\overrightarrow{V}).\label{eq:identity}
\end{equation}
First, it is clear that the identity is true if only (\ref{eq:partition_def_1})
and (\ref{eq:partition_def_2}) are imposed on partitions. To show
it is fine to include (\ref{eq:partition_def_3}), it suffices to
show that any geodesic can be divided into $k$ segments such that
the $\ell_{1}$-distance between two ends of any segment is exactly
$2n$. Assume some geodesic is an ordered set of $2kn+1$ vertices
$(W_{0}=(0,0),W_{2},W_{3},...,W_{2kn}=(kn,kn))$. Notice that $||W_{i}-W_{i+j}||_{1}=j$,
for any $i,\ i+j\in[2kn]$. Therefore, this geodesic can be divided
on $(V_{0}=W_{0},V_{1}=W_{2n},...,V_{k}=W_{2kn})$ into $k$ segments
with $||V_{i}-V_{i+1}||_{1}=2n$. 

Next, consider a particular set of directed paths going from $(0,0)$,
through $(k,2n-k)$, to $(2n,2n)$ on a $2n\times2n$ grid. Then,
by superadditivity, $T(k,2n-k)+T(2n-k,k)\le T(2n,2n)$. Further, thanks
to symmetry, $\mathbb{E}T(k,2n-k)=\mathbb{E}T(2n-k,k)$. Hence, $\mathbb{E}T(k,2n-k)\le\frac{1}{2}\mathbb{E}T(2n,2n).$
So, $\mathbb{E}T(\overrightarrow{V})\le k\mathbb{E}T(2n,2n)/2$. 

On the other hand, let us view $T(\overrightarrow{V})$ as a function
\[
T(\overrightarrow{V}):\ (D_{1},...,D_{2kn})\rightarrow T(\overrightarrow{V})(D_{1},...,D_{2kn})\in\mathbb{N},
\]
where $\{D_{j}\}_{j=1}^{2kn}$ is the set of batches of the weights
$w(v)$ on the same reversed diagonal, i.e., $D_{j}=\{w(v)\ |\ v\in\{x+y=j\}\cap[kn]\times[kn]\}$.
Clearly, the independence of the weights yields the independence of
the random vectors $D_{j}$, $j=1,...,2kn$. Further, any batch $D_{j}$
is a strictly decreasing set of vertices and so by Lemma \ref{lem:revdiag},
independently resampling any one of these random vectors, say, as
$D_{j_{0}}'$ gives 
\[
|T(\overrightarrow{V})(D_{1},...,D_{j_{0}}',...D_{2kn})-T(\overrightarrow{V})(D_{1},...,D_{j_{0}},...,D_{2kn})|\le1.
\]
Further, applying Hoeffding's martingale inequality gives
\[
\mathbb{P}\left(T(\overrightarrow{V})-\mathbb{E}T(\overrightarrow{V})\ge tkn\right)\le\exp\left(-t^{2}kn\right),
\]
and so,
\[
\mathbb{P}\left(T(\overrightarrow{V})-\frac{k}{2}\mathbb{E}T_{2n}\ge tkn\right)\le\exp\left(-t^{2}kn\right).
\]
In addition, by (\ref{eq:identity}), 
\begin{eqnarray}
\mathbb{P}\left(\frac{T_{kn}}{kn}-\frac{\mathbb{E}T_{2n}}{2n}\ge t\right) & = & \mathbb{P}\left(T_{kn}-\frac{k}{2}\mathbb{E}T_{2n}\ge tkn\right)\nonumber \\
 & \le & \sum_{partitions\ \overrightarrow{V}}\mathbb{P}\left(T(\overrightarrow{V})-\frac{k}{2}\mathbb{E}T_{2n}\ge tkn\right)\nonumber \\
 & \le & \#partitions\exp\left(-t^{2}kn\right).\label{eq:rateofconvgence_main}
\end{eqnarray}
Since $\#partitions=\left(\begin{array}{c}
kn+k-1\\
k-1
\end{array}\right)$ and $\left(\begin{array}{c}
p\\
q
\end{array}\right)\le p^{p}/q^{q}(p-q)^{p-q}$, 
\[
\#partitions\le\left(\begin{array}{c}
kn+k\\
k
\end{array}\right)\le(kn+k)^{kn+k}/k^{k}(kn)^{kn}\le\exp(ck\ln n),
\]
for some absolute constant $c>0$. Combining this with (\ref{eq:rateofconvgence_main})
and taking $t=\sqrt{2c\ln n/n}$ leads to
\[
\mathbb{P}\left(\frac{T_{kn}}{kn}-\frac{\mathbb{E}T_{2n}}{2n}\ge\sqrt{\frac{2c\ln n}{n}}\right)\le\exp(-ck\ln n).
\]
Hence,
\[
\frac{\mathbb{E}T_{kn}}{kn}-\frac{\mathbb{E}T_{2n}}{2n}\le\sqrt{\frac{2c\ln n}{n}}+2\exp(-ck\ln n),
\]
and letting $k\rightarrow\infty$ gives,
\[
\frac{\mathbb{E}T_{2n}}{2n}\ge\gamma^{*}-\sqrt{\frac{2c\ln n}{n}}.
\]
In addition, for odd integers, 
\[
\frac{\mathbb{E}T_{2n+1}}{2n+1}\ge\frac{\mathbb{E}T_{2n}}{2n}-\frac{\mathbb{E}T_{2n+1}}{2n(2n+1)}\ge\gamma^{*}-\sqrt{\frac{2c\ln n}{n}}-\frac{1}{n}.
\]
\end{proof}
To state our next lemma, recall that $R_{\eta,p_{1},p_{2}}=\{r:\ \#\{i\in[m]:\ kp_{1}\le r_{i}-r_{i-1}\le kp_{2}\}\ge(1-\eta)m\}.$
\begin{lem}
\label{lem:linske} Let $0<\eta<1$ and let $p_{i}$ ($i=1,2$) be
such that $0<p_{1}<1<p_{2},$ $g_{\leftthreetimes}(p_{i})<g_{\leftthreetimes}(1)$.
Let $\delta^{*}=\min(g_{\leftthreetimes}(1)-g_{\leftthreetimes}(p_{1}),\allowbreak g_{\leftthreetimes}(1)-g_{\leftthreetimes}(p_{2}))$
and let $\delta^{*}\eta=\Omega(\sqrt{\log n/n})$. Then, for any $\overrightarrow{r}=(r_{0},r_{1},...,r_{m})\in R_{\eta,p_{1,}p_{2}}^{c}$
and any $\delta\in(0,\delta^{*})$,
\begin{equation}
\mathbb{E}(T_{n}(\overrightarrow{r})-T_{n})\le-\frac{\delta\eta n}{2},\label{eq:lineardev}
\end{equation}
for all $n=n(\eta,\delta)$ large enough.
\end{lem}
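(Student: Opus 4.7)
The plan is to compare $\mathbb{E}T_n(\overrightarrow{r})$ against $\mathbb{E}T_n$ via a block-wise upper bound that extracts a linear-in-$n$ loss from the skewed blocks, then to absorb the sub-linear rate-of-convergence error from Proposition~\ref{prop:rateofconv} using the hypothesis $\delta^*\eta = \Omega(\sqrt{\ln n/n})$.

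First, I would write $\mathbb{E}T_n(\overrightarrow{r}) = \sum_{i=1}^m \mathbb{E}T(k, kp_i)$ with $p_i := (r_i - r_{i-1})/k \ge 0$, noting that $\sum_{i=1}^m p_i = m$. Since $\mathbb{E}T(n, np)$ is superadditive in $n$, the limit defining $g_{\leftthreetimes}(p)$ is actually a supremum, yielding the block-wise bound $\mathbb{E}T(k, kp_i) \le (k(1+p_i)/2)\, g_{\leftthreetimes}(p_i)$. Next, partition $[m]$ into good indices $I := \{i : p_i \in [p_1, p_2]\}$ and bad indices $I^c$, with $|I^c| > \eta m$ by the hypothesis $\overrightarrow{r} \in R^c_{\eta, p_1, p_2}$. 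On $I$ use $g_{\leftthreetimes}(p_i) \le g_{\leftthreetimes}(1)$; on $I^c$, invoke the unimodality of $g_{\leftthreetimes}$ (inherited from the concavity-driven monotonicity of $g_\perp$ on each side of $0$ recorded in Section~\ref{Sec:Preliminaries}) to get $g_{\leftthreetimes}(p_i) \le \max(g_{\leftthreetimes}(p_1), g_{\leftthreetimes}(p_2)) = g_{\leftthreetimes}(1) - \delta^*$. Summing, and using $\sum_{i=1}^m k(1+p_i)/2 = n$ together with $\sum_{i \in I^c} k(1+p_i)/2 \ge k|I^c|/2 > \eta n/2$, this gives
\[
\mathbb{E}T_n(\overrightarrow{r}) \le n\, g_{\leftthreetimes}(1) - \frac{\delta^*\eta n}{2}.
\]

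Combining this with the lower bound $\mathbb{E}T_n \ge n\, g_{\leftthreetimes}(1) - c\sqrt{n \ln n}$ from Proposition~\ref{prop:rateofconv} yields
\[
\mathbb{E}(T_n(\overrightarrow{r}) - T_n) \le -\frac{\delta^*\eta n}{2} + c\sqrt{n \ln n}.
\]
Writing $\delta^* = \delta + (\delta^* - \delta)$ with $\delta^* - \delta > 0$, the hypothesis $\delta^*\eta = \Omega(\sqrt{\ln n/n})$ guarantees that $(\delta^* - \delta)\eta n/2 \ge c\sqrt{n \ln n}$ for all $n = n(\eta, \delta)$ large enough, which absorbs the error term and delivers the target bound $-\delta\eta n/2$.

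The main obstacle is the dichotomy step: the uniform per-block loss $\delta^*$ for skewed blocks rests on the unimodality of $g_{\leftthreetimes}$ about its maximum at $p = 1$, which is supplied by the concavity of $g_\perp$. A minor technical point is that the superadditivity upper bound needs $kp_i$ to be an integer, but this is automatic since $kp_i = r_i - r_{i-1}$ by definition of the decomposition.
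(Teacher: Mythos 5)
Your proof is correct and follows essentially the same route as the paper: bound each block's expectation via the superadditivity fact that $g_{\leftthreetimes}$ is a supremum, use unimodality of $g_{\leftthreetimes}$ (from concavity of $g_\perp$) to extract a uniform loss $\delta^*$ on the at-least-$\eta m$ skewed blocks, sum to get a linear-in-$n$ deficit of $\delta^*\eta n/2$, and absorb the $O(\sqrt{n\ln n})$ error from Proposition~\ref{prop:rateofconv} using $\delta^*\eta = \Omega(\sqrt{\ln n/n})$. The only difference from the paper is cosmetic bookkeeping (you collect the weighted sum $\sum_{i\in I^c} k(1+p_i)/2$ directly rather than bounding block-by-block by $\delta^* k/2$), and you inherit the same mild imprecision as the paper about the implicit constant in the $\Omega$ hypothesis, which is resolved by the explicit parameter choices made later in the proof of Theorem~\ref{thm:mainmain}.
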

\begin{proof}
Let $p>0$. By superadditivity, $g_{\leftthreetimes}(p)$ is well
defined and finite. Moreover, for any $k\ge1$,
\begin{equation}
\frac{2\mathbb{E}T(k,kp)}{k(1+p)}\le g_{\leftthreetimes}(p).\label{eq:limitslopp}
\end{equation}
Since $g_{\bot}$ is symmetric around $q=0$ and concave and since
$(1-p)/(1+p)$ is a monotone transformation in $p$, $g_{\leftthreetimes}$
is non-decreasing up to $p=1$ and non-increasing thereafter. Proposition
\ref{prop:nonflatterninterval} shows that there exist $0<p_{1}<1<p_{2}$
such that for any $p\notin[p_{1},p_{2}]$,
\begin{equation}
g_{\leftthreetimes}(p)\le\max(g_{\leftthreetimes}(p_{1}),g_{\leftthreetimes}(p_{2})).\label{eq:concave}
\end{equation}
Therefore, for any $p\notin[p_{1},p_{2}]$, (\ref{eq:limitslopp})
and (\ref{eq:concave}) lead to:
\begin{equation}
\frac{2\mathbb{E}T(k,kp)}{k(1+p)}\le\max(g_{\leftthreetimes}(p_{1}),g_{\leftthreetimes}(p_{2}))=g_{\leftthreetimes}(1)-\delta^{*},\label{eq:limitconcave}
\end{equation}
where $0<\delta^{*}:=\min(g_{\leftthreetimes}(1)-g_{\leftthreetimes}(p_{1}),g_{\leftthreetimes}(1)-g_{\leftthreetimes}(p_{2}))$.

From here on, the proof proceeds as the proof, with its notation,
of Lemma $2.1$, in \cite{houdre2009closeness}. Since the weights
are identically distributed, in the $ith$ block $[(i-1)k+1,ik]\times[r_{i-1}+1,r_{i}]$,
letting $r_{i}-r_{i-1}:=kp$ and assuming $(r_{i}-r_{i-1})/k=p\notin[p_{1},p_{2}]$,
then (\ref{eq:limitconcave}) gives
\begin{equation}
g_{\leftthreetimes}(1)-\frac{2\mathbb{E}T(((i-1)k+1,r_{i-1}+1),(ik,r_{i}))}{k+r_{i}-r_{i-1}}\ge\delta^{*}.\label{eq:iblocklimitconcave}
\end{equation}
Hence, 
\[
\frac{1}{2}g_{\leftthreetimes}(1)(k+r_{i}-r_{i-1})-\mathbb{E}T(((i-1)k+1,r_{i-1}+1),(ik,r_{i}))\ge\frac{1}{2}\delta^{*}k.
\]
Letting $\mathcal{M}:=\{i:\ r_{i}-r_{i-1}\notin[kp_{1},kp_{2}]\}$,
we then have
\begin{equation}
\sum_{i\in\mathcal{M}}\frac{1}{2}g_{\leftthreetimes}(1)(k+r_{i}-r_{i-1})-\mathbb{E}T(((i-1)k+1,r_{i-1}+1),(ik,r_{i}))\ge\frac{1}{2}\delta^{*}k\eta m=\frac{1}{2}n\delta^{*}\eta,\label{eq:skewsum}
\end{equation}
while, for any $i\in\mathcal{M}^{c}=\{i:\ r_{i}-r_{i-1}\in[kp_{1},kp_{2}]\}$,
(\ref{eq:limitslopp}) gives
\[
\frac{1}{2}g_{\leftthreetimes}(1)(k+r_{i}-r_{i-1})-\mathbb{E}T(((i-1)k+1,r_{i-1}+1),(ik,r_{i}))\ge0.
\]
Therefore,
\begin{equation}
\sum_{i\in\mathcal{M}}\frac{1}{2}g_{\leftthreetimes}(1)(k+r_{i}-r_{i-1})-\mathbb{E}T(((i-1)k+1,r_{i-1}+1),(ik,r_{i}))\le g_{\leftthreetimes}(1)n-\mathbb{E}T_{n}(\overrightarrow{r}).\label{eq:totalsum}
\end{equation}
Combining (\ref{eq:skewsum}) and (\ref{eq:totalsum}) leads to, 
\begin{equation}
g_{\leftthreetimes}(1)n-\mathbb{E}T_{n}(\overrightarrow{r})\ge\frac{n\delta^{*}\eta}{2},\label{eq:totaldev}
\end{equation}
when $\overrightarrow{r}=(r_{0},r_{1},...,r_{m})\in R_{\eta,p_{1,}p_{2}}^{c}$.
Next, by Proposition \ref{prop:rateofconv} and since $\delta^{*}\eta=\Omega(\sqrt{\ln n/n})$,
it follows that for any positive $\delta<\delta^{*}$ and for all
$n$ large enough,
\begin{equation}
0\le g_{\leftthreetimes}(1)-\frac{\mathbb{E}T_{n}}{n}\le c\sqrt{\frac{\ln n}{n}}\le\frac{(\delta^{*}-\delta)\eta}{2},\label{eq:squarelimt}
\end{equation}
where $c>0$ is an absolute constant. So combining (\ref{eq:totaldev})
and (\ref{eq:squarelimt}), and for $\overrightarrow{r}=(r_{0},r_{1},...,r_{m})\in R_{\eta,p_{1},p_{2}}^{c}$,
\[
\mathbb{E}(T_{n}(\overrightarrow{r})-T_{n})\le-\frac{\delta\eta n}{2}.
\]
\end{proof}

Before presenting the main result of this section, recall (see (\ref{eq:defeventA}))
that $A_{\eta,p_{1},p_{2}}^{n}$ is the event that all the optimal
decompositions belong to $R_{\eta,p_{1},p_{2}}$.
\begin{thm}
\label{thm:eventa}Let $0<\eta<1$ and let $p_{i}$ ($i=1,2$) be
such that $0<p_{1}<1<p_{2},$ $g_{\leftthreetimes}(p_{i})<g_{\leftthreetimes}(1)$.
Let $\delta^{*}=\min(g_{\leftthreetimes}(1)-g_{\leftthreetimes}(p_{1}),\allowbreak g_{\leftthreetimes}(1)-g_{\leftthreetimes}(p_{2}))$
and let $\delta^{*}\eta=\Omega(\sqrt{\log n/n})$. Let the integer
$k$ be such that $(1+\ln k)/k\le\delta^{2}\eta^{2}/16$, where $\delta\in(0,\delta^{*})$.
Then,
\[
\mathbb{P}(A_{\eta,p_{1},p_{2}}^{n})\ge1-\exp\left(-n\left(-\frac{1+\ln k}{k}+\frac{\delta^{2}\eta^{2}}{16}\right)\right),
\]
for all $n=n(\eta,\delta)$ large enough.
\end{thm}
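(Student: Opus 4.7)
The plan is to combine the linear mean gap from Lemma~\ref{lem:linske} with a bounded-differences concentration estimate, closing by a union bound over all ``skewed'' decompositions. Since every optimal decomposition realizes $T(n,n)$,
\[
\bigl(A^n_{\eta,p_1,p_2}\bigr)^c\ \subseteq\ \bigcup_{\overrightarrow r\in R^c_{\eta,p_1,p_2}}\bigl\{T_n(\overrightarrow r)\ge T(n,n)\bigr\},
\]
so it suffices to bound $\mathbb{P}(T_n(\overrightarrow r)\ge T(n,n))$ uniformly in $\overrightarrow r\in R^c_{\eta,p_1,p_2}$ and to count the index set.

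For a fixed $\overrightarrow r\in R^c_{\eta,p_1,p_2}$, Lemma~\ref{lem:linske} gives $\mathbb{E}T(n,n)-\mathbb{E}T_n(\overrightarrow r)\ge \delta\eta n/2$, so
\[
\bigl\{T_n(\overrightarrow r)\ge T(n,n)\bigr\}\subseteq \bigl\{T_n(\overrightarrow r)-\mathbb{E}T_n(\overrightarrow r)\ge \tfrac{\delta\eta n}{4}\bigr\}\cup\bigl\{T(n,n)-\mathbb{E}T(n,n)\le -\tfrac{\delta\eta n}{4}\bigr\}.
\]
To bound each piece I would expose the weights in the $2n+1$ independent reversed-diagonal batches $D_j=\{w(v):v\in\{x+y=j\}\cap[n]\times[n]\}$ and apply McDiarmid's bounded-differences inequality. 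Each $D_j$ is a strictly decreasing vertex set, so by Lemma~\ref{lem:revdiag} resampling $D_j$ perturbs $T(n,n)$ by at most $1$. For $T_n(\overrightarrow r)=\sum_{i=1}^m T_i$, the consecutive blocks have $x+y$-ranges $[(i-1)k+r_{i-1},ik+r_i]$ and $[ik+r_i,(i+1)k+r_{i+1}]$, which share only the endpoint $ik+r_i$ and are disjoint for non-consecutive indices by monotonicity of $(r_i)$; thus each $D_j$ meets at most one block, and at a boundary value $j=ik+r_i$ the unique shared vertex $(ik,r_i)$ belongs, by the left-open-right-closed convention, to a single block. Applying Lemma~\ref{lem:revdiag} inside that block shows that $T_n(\overrightarrow r)$ likewise changes by at most $1$ per resampled diagonal. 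McDiarmid with $c_j=1$ and $N=2n+1$ then yields each of the two pieces bounded by $\exp(-n\delta^2\eta^2/16)$, hence
\[
\mathbb{P}\bigl(T_n(\overrightarrow r)\ge T(n,n)\bigr)\ \le\ 2\exp\!\left(-\frac{n\delta^2\eta^2}{16}\right),
\]
uniformly in $\overrightarrow r\in R^c_{\eta,p_1,p_2}$.

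To finish, bound $|R^c_{\eta,p_1,p_2}|$ by the total number of decompositions, $\binom{n+m-1}{m-1}$, and use $\binom{N}{K}\le(eN/K)^K$ with $m=n/k$ to obtain $\binom{n+m-1}{m-1}\le(ek)^m=\exp(n(1+\ln k)/k)$. Combining with the per-$\overrightarrow r$ bound gives
\[
\mathbb{P}\bigl((A^n_{\eta,p_1,p_2})^c\bigr)\ \le\ 2\exp\!\left(-n\Bigl(\frac{\delta^2\eta^2}{16}-\frac{1+\ln k}{k}\Bigr)\right),
\]
the factor $2$ being absorbed for $n=n(\eta,\delta)$ large enough. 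The main obstacle is the bounded-differences step: one must argue that the per-diagonal increment of $T_n(\overrightarrow r)$ is $1$ rather than the naive $m$, which rests on the essentially-disjoint $x+y$-ranges of the blocks in $\overrightarrow r$ combined with Lemma~\ref{lem:revdiag} applied inside a single block.
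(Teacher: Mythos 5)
Your proposal is correct and follows essentially the same route as the paper: a union bound over $R^{c}_{\eta,p_{1},p_{2}}$, the linear mean gap from Lemma \ref{lem:linske}, bounded differences over the reversed-diagonal batches via Lemma \ref{lem:revdiag}, and the $(ek)^{m}$ count of decompositions; your explicit verification that each diagonal meets the relevant weights of at most one block (so $T_{n}(\overrightarrow{r})$ changes by at most $1$ per resampled batch) is precisely the detail the paper leaves implicit under ``by Lemma \ref{lem:revdiag} again.'' The only deviation is cosmetic: the paper applies the concentration inequality once to the difference $T_{n}(\overrightarrow{r})-T_{n}$ with increment bound $2$, whereas your two-sided split costs a prefactor $2$ (and your count $\binom{n+m-1}{m-1}$ is slightly larger than $(ek)^{m}$), slack that is absorbable only when $n\left(\delta^{2}\eta^{2}/16-(1+\ln k)/k\right)\to\infty$ --- true in the paper's application, but worth noting since the stated hypothesis $(1+\ln k)/k\le\delta^{2}\eta^{2}/16$ allows equality.
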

\begin{proof}
The beginning of this proof, which is similar to that of Theorem $2.2$
in \cite{houdre2009closeness}, is only sketched. By superadditivity,
the decomposition $\overrightarrow{r}$ is optimal if and only if
\begin{equation}
T_{n}(\overrightarrow{r})\ge T_{n}.\label{eq:optr}
\end{equation}
Assume now that the event $A_{\eta,p_{1},p_{2}}^{n}$ does not hold.
Then there exists an optimal decomposition $\overrightarrow{r_{*}}$
such that $\overrightarrow{r_{*}}\in R_{\eta,p_{1},p_{2}}^{c}$, i.e.,
\begin{align*}
(A_{\eta,p_{1},p_{2}}^{n})^{c} & =\bigcup_{\overrightarrow{r}\in R_{\eta,p_{1},p_{2}}^{c}}\{\overrightarrow{r}=\overrightarrow{r_{*}}\}\\
 & =\bigcup_{\overrightarrow{r}\in R_{\eta,p_{1},p_{2}}^{c}}\{T_{n}(\overrightarrow{r})-T_{n}\ge0\},
\end{align*}
hence,
\begin{equation}
\mathbb{P}((A_{\eta,p_{1},p_{2}}^{n})^{c})\le\sum_{\overrightarrow{r}\in R_{\eta,p_{1},p_{2}}^{c}}\mathbb{P}(T_{n}(\overrightarrow{r})-T_{n}\ge0).\label{eq:compleupbd}
\end{equation}
Then, by Lemma \ref{lem:linske}, for any decomposition $\overrightarrow{r}\in R_{\eta,p_{1},p_{2}}^{c}$,
\[
\mathbb{E}(T_{n}(\overrightarrow{r})-T_{n})\le-\frac{\delta\eta n}{2},
\]
and so 
\begin{equation}
\mathbb{P}(T_{n}(\overrightarrow{r})-T_{n}\ge0)\le\mathbb{P}\left(T_{n}(\overrightarrow{r})-T_{n}-\mathbb{E}(T_{n}(\overrightarrow{r})-T_{n})\ge\frac{\delta\eta n}{2}\right),\label{eq:hoeffdingpre}
\end{equation}
for all $n$ large enough. Next, as in the proof of Proposition \ref{prop:rateofconv},
we view the random variable $T_{n}(\overrightarrow{r})-T_{n}:=\Delta$
as a function 
\[
\Delta:\ (D_{1},...,D_{2n})\rightarrow\Delta(D_{1},...,D_{2n})\in\mathbb{Z}\cap[-2n,2n],
\]
where $\{D_{j}\}_{j=1}^{2n}$ is the set of batches of the weights
$w(v)$ on the same reversed diagonal, i.e., $D_{j}=\{w(v)\ |\ v\in\{x+y=j\}\cap[n]\times[n]\}$.
So by Lemma \ref{lem:revdiag} again, independently resampling any
one of these random vectors, say, as $D_{j_{0}}'$ gives 

\begin{multline}
|\Delta(D_{1},...,D_{j_{0}}',...D_{2n})-\Delta(D_{1},...,D_{j_{0}},...,D_{2n})|\\
\le|T_{n}^{j_{0}}(\overrightarrow{r})-T_{n}(\overrightarrow{r})|+|T_{n}^{j_{0}}-T_{n}|\le2,\label{eq:martconbd}
\end{multline}
where $T_{n}^{j_{0}}(\overrightarrow{r})$ and $T_{n}^{j_{0}}$ are
respectively the last passage time associated with $\overrightarrow{r}$
and the overall last passage time with the weights in $D_{j_{0}}$
resampled. 

Finally, Hoeffding's martingale inequality applied to $\Delta(D_{1},...,D_{2n})$
and (\ref{eq:hoeffdingpre}) yield
\[
\mathbb{P}(T_{n}(\overrightarrow{r})-T_{n}\ge0)\le\exp\left(-\frac{\delta^{2}\eta^{2}n}{16}\right),
\]
for $\overrightarrow{r}\in R_{\eta,p_{1},p_{2}}^{c}$. Further, by
(\ref{eq:compleupbd}),
\begin{eqnarray*}
\mathbb{P}((A_{\eta,p_{1},p_{2}}^{n})^{c}) & \le & (ek)^{m}\exp\left(-\frac{\delta^{2}\eta^{2}n}{16}\right)\\
 & \le & \mbox{exp}\left(-n\left(-\frac{1+\ln k}{k}+\frac{\delta^{2}\eta^{2}}{16}\right)\right),
\end{eqnarray*}
since 
\[
\#R_{\eta,p_{1},p_{2}}^{c}\le\left(\begin{array}{c}
n\\
m
\end{array}\right)\le\frac{n^{m}}{n!}\le\left(\frac{en}{m}\right)^{m},
\]
when $n$ is large enough.
\end{proof}
\begin{rem}
Note that above, when applying Hoeffding's martingale inequality and
if only a single weight had been independently resampled then, the
exponential concentration would have failed to hold, since this naively
constructed martingale would have had a length of size $\Theta(n^{2})$.
This justifies and motivates resampling weights in batches.

\end{rem}

\subsection{Proof of the Main Result\label{Sec:closeness2diag}}

Heuristically, if most blocks in an optimal decomposition are close-to-square
shaped, then all the vertices on the diagonals of these blocks are
close to the main diagonal of the grid and therefore all the corresponding
geodesics going through these vertices do not deviate much from it.
Further, the parameters such as $k$, $\delta$ and $\eta$ can be
fixed in an optimal way so that the cylinder, in which geodesics are
confined, is as small as possible. 
\begin{proof}[{\textbf{Proof~of~Theorem}} \ref{thm:mainmain}:]
Let $D_{\eta,p_{1},p_{2}}^{n}$ be the event that all the geodesics
are above the line $\ell_{1:\ }y=p_{1}x-p_{1}n\eta-p_{1}k$ and below
the line $\ell_{2}:\ y=p_{2}x+p_{2}n\eta+p_{2}k$. We first show that
the probability of this event is exponentially close to $1$. Again
the proof is similar to the corresponding result in \cite{houdre2009closeness}
and as such only sketched. We start with a few definitions: denote
by $D_{a}^{n}$ the event that all the geodesics are above the line
$\ell_{1}:\ y=p_{1}x-p_{1}n\eta-p_{1}k$ and by $D_{b}^{n}$ the event
that they are below the line $\ell_{2}:\ y=p_{2}x+p_{2}n\eta+p_{2}k$.
Then $D_{\eta,p_{1},p_{2}}^{n}=D_{a}^{n}\cap D_{b}^{n}$, hence
\[
\mathbb{P}((D_{\eta,p_{1},p_{2}}^{n})^{c})\le\mathbb{P}((D_{a}^{n})^{c})+\mathbb{P}((D_{b}^{n})^{c}).
\]
Moreover, as shown next,
\begin{equation}
A_{\eta,p_{1},p_{2}}^{n}\subset D_{a}^{n},\ A_{\eta,p_{1},p_{2}}^{n}\subset D_{b}^{n},\label{eq:adinclusion}
\end{equation}
so that by Theorem \ref{thm:eventa}
\begin{eqnarray}
\mathbb{P}((D_{\eta,p_{1},p_{2}}^{n})^{c}) & \le & 2\exp\left(-n\left(-\frac{1+\ln k}{k}+\frac{\delta^{2}\eta^{2}}{16}\right)\right).\label{eq:eventd}
\end{eqnarray}
To prove (\ref{eq:adinclusion}), at first we prove that $A_{\eta,p_{1},p_{2}}^{n}\subset D_{a}^{n}$.
This last inclusion is obtained by considering three cases which depend
on $x$: If $(x,y)$ is on one of the geodesics in the event $A_{\eta,p_{1},p_{2}}^{n}$,
namely, $x=uk$, where $u\in\mathbb{N}=\{0,1,2,...\}$, and $uk\le n\eta$;
$x=uk$, where $u\in\mathbb{N}$ and $uk>n\eta$; and there exists
$u\in\mathbb{N}$ such that $uk<x<(u+1)k$. Before we move on to verify
the inclusion case by case, recall again that $A_{\eta,p_{1},p_{2}}^{n}$
corresponds to geodesic decompositions belonging $R_{\eta,p_{1},p_{2}}$,
i.e., such that the number of $i\in[m]$ with $(r_{i+1}-r_{i})\in[p_{1}k,p_{2}k]$
is at least $(1-\eta)m$, where $m$ is the total number of blocks
and $mk=n$ (see (\ref{eq:defeventR})).

In the first of these cases, $p_{1}x-p_{1}n\eta\le0$ and therefore,
\[
y\ge0\ge p_{1}x-p_{1}n\eta\ge p_{1}x-p_{1}n\eta-p_{1}k.
\]

In the second case, by the very definition of $R_{\eta,p_{1},p_{2}}$,
there are at most $\eta m$ blocks having side length $(r_{i+1}-r_{i})$
less than $p_{1}k$. Since $x=uk$, in the worst case, all these $\eta m$
blocks appear among the first $u$ blocks. Hence, at least $u-\eta m$
blocks of the first $u$ blocks have side length at least equal to
$p_{1}k$. Therefore,
\[
y\ge(u-\eta m)p_{1}k=p_{1}(uk)-p_{1}\eta mk=p_{1}x-p_{1}\eta n\ge p_{1}x-p_{1}\eta n-p_{1}k.
\]

In the third and last case, since $x_{1}:=uk<x<(u+1)k$, then 
\begin{equation}
x-x_{1}<k.\label{eq:integerx}
\end{equation}
 From the first two cases, $y_{1}\ge p_{1}x_{1}-p_{1}\eta n$. Moreover,
a geodesic is a directed path and so $y\ge y_{1}$ since $x>x_{1}$.
Hence, by (\ref{eq:integerx}) 
\[
y\ge y_{1}\ge p_{1}x_{1}-p_{1}\eta n\ge p_{1}x-p_{1}k-p_{1}n\eta.
\]
 Symmetrically, a reversed inequality can be proved for the upper
bounding line $y=p_{2}x+p_{2}n\eta+p_{2}k$, and then (\ref{eq:eventd})
follows.

Now, let $k=n^{\alpha}$, $p_{1,2}=1\pm n^{-\beta}$, for $0<\alpha,\beta<1$
and so, $\delta^{*}=\min(g_{\leftthreetimes}(1)-g_{\leftthreetimes}(p_{1}),\allowbreak g_{\leftthreetimes}(1)-g_{\leftthreetimes}(p_{2}))=cn^{-\kappa\beta}$,
for some constant $c>0$. Further, set $\delta=\delta^{*}/2=cn^{-\kappa\beta}/2$
and let $\eta=4\sqrt{2}n^{\kappa\beta-\alpha/2}\sqrt{1+\alpha\ln n}/c$
in (\ref{eq:eventd}) be such that $2\kappa\beta<\alpha$. Then, the
condition $\delta^{*}\eta=\Theta(n^{-\alpha/2}\sqrt{\ln n})=\Omega(\sqrt{\ln n/n})$
is satisfied, since $\alpha<1$. Hence,
\[
\mathbb{P}(D_{\alpha,\beta}^{n})\ge1-2\exp(-(1+\alpha\ln n)n^{1-\alpha}),
\]
where $D_{\alpha,\beta}^{n}$ is the event that all the geodesics
are above the line $y=(1-n^{-\beta})(x-cn^{1+\kappa\beta-\alpha/2}\sqrt{1+\alpha\ln n}-n^{\alpha})$
and below the line $y=(1+n^{-\beta})(x+cn^{1+\kappa\beta-\alpha/2}\sqrt{1+\alpha\ln n}+n^{\alpha})$.

Lastly, we will fix the orders of $\alpha$ and $\beta$ so that the
cylinder has minimal width and so that the condition $2\kappa\beta<\alpha$
is satisfied. Notice that the distances at which the lines $\ell_{1,2}$
are from the main diagonal is of the same order as the Euclidean distance
from their intercepts on the left and right edges of the grid to,
respectively, the lower-left vertex $V_{1}$ and the upper-right vertex
$V_{2}$ as pictured in Figure \ref{fig:disvis}. For the lower bounding
line $\ell_{1}$, denoting its intercept on the left edge by $U_{1}^{1}$,
\begin{eqnarray*}
\lvert U_{1}^{1}V_{1}\rvert & = & (1-n^{-\beta})(cn^{1+\kappa\beta-\alpha/2}\sqrt{1+\alpha\ln n}+n^{\alpha})\\
 & = & \Theta(n^{1+\kappa\beta-\alpha/2}\sqrt{\ln n}+n^{\alpha}).
\end{eqnarray*}
Then denoting its intercept on the right edge by $U_{2}^{1}$, whose
$y$-coordinate is $(1-n^{-\beta})(n-cn^{1+\kappa\beta-\alpha/2}\sqrt{1+\alpha\ln n}-n^{\alpha})$,
\begin{eqnarray*}
\lvert U_{2}^{1}V_{2}\rvert & = & (1-n^{-\beta'})n-(1-n^{-\beta})(n-cn^{1+\kappa\beta-\alpha/2}\sqrt{1+\alpha\ln n}-n^{\alpha})\\
 & = & n(n^{-\beta}-n^{-\beta'})+(1-n^{-\beta})(cn^{1+\kappa\beta-\alpha/2}\sqrt{1+\alpha\ln n}+n^{\alpha})\\
 & = & \Theta(n^{1-\beta}+n^{1+\kappa\beta-\alpha/2}\sqrt{\ln n}+n^{\alpha}),
\end{eqnarray*}
since $\beta'>\beta$. Therefore the distance from $\ell_{1}$ to
the diagonal is of order 
\[
n^{(1-\beta)\vee(1+\kappa\beta-\alpha/2)\vee\alpha}\sqrt{\ln n}.
\]
 Symmetrically, a similar result holds true for the upper line $\ell_{2}$.
The minimizing order occurs for $1-\beta=1+\kappa\beta-\alpha/2=\alpha$,
i.e., 
\[
\alpha=(2\kappa+2)\beta=\frac{2\kappa+2}{2\kappa+3}>2\kappa\beta.
\]
Setting $\alpha=(2\kappa+2)/(2\kappa+3)$ and $\beta=1/(2\kappa+3)$
in the event $D_{\alpha,\beta}^{n}$ gives

\[
\mathbb{P}\left(D_{\frac{2\kappa+2}{2\kappa+3},\frac{1}{2\kappa+3}}^{n}\right)\ge1-2\exp\left(-\left(1+\frac{2\kappa+2}{2\kappa+3}\ln n\right)n^{1/(2\kappa+3)}\right),
\]
which completes the proof.
\end{proof}
\begin{figure}
\begin{centering}
\includegraphics{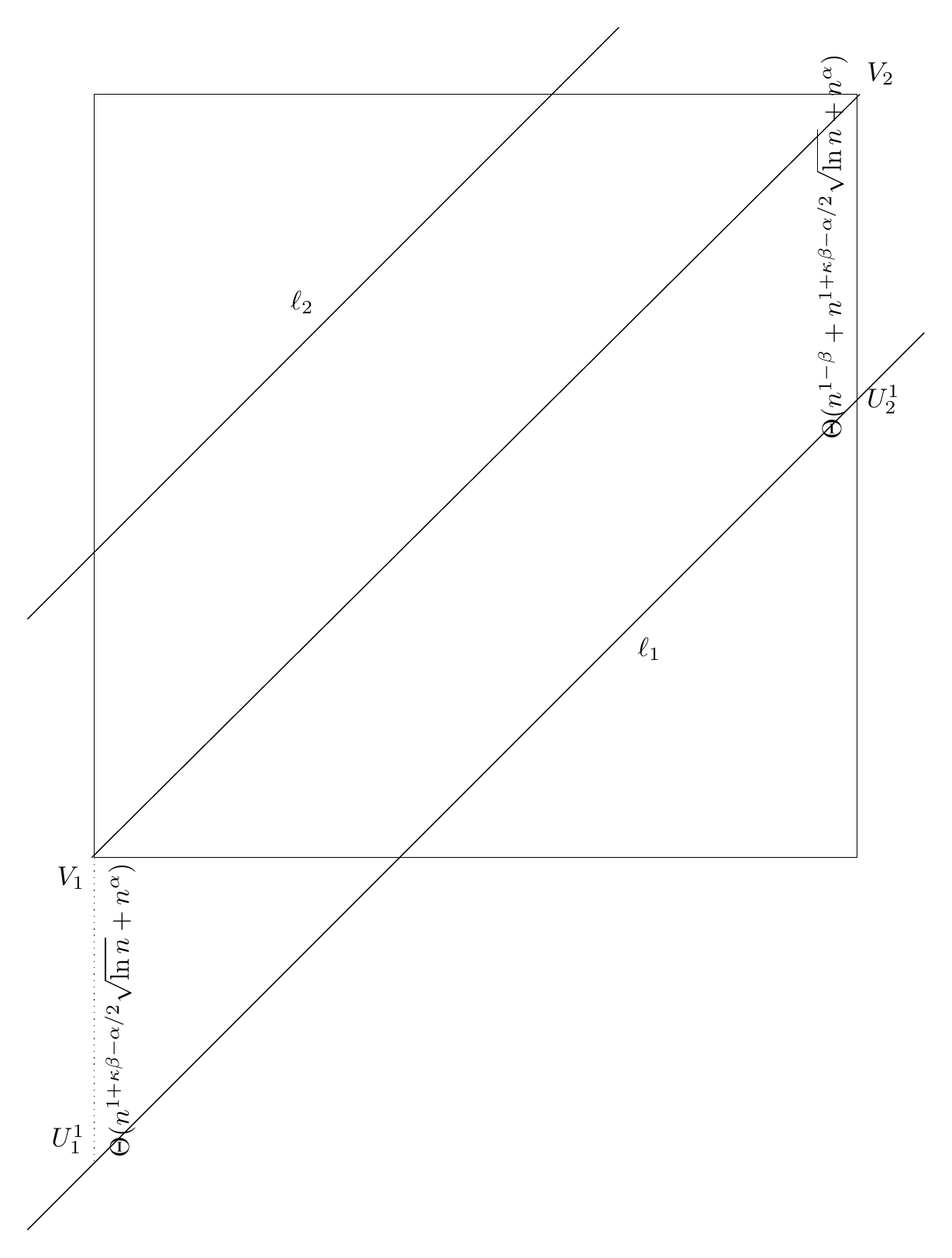}
\par\end{centering}
\caption{}

\label{fig:disvis}
\end{figure}

\section{Concluding Remarks\label{sec:concluding-remarks}}

By symmetry, it is clear that our methodology for proving the concentration
of the geodesics in DLPP is also applicable to the concentration of
geodesics in Bernoulli directed first passage site percolation. In
DFPP, one studies the minimum of the passage times instead of maximum.
In that context, the shape functions $g$, $g_{\perp}$ and $g_{\leftthreetimes}$
are convex instead of concave. Then, a version of Lemma \ref{lem:linske}
with the inequality (\ref{eq:lineardev}) reversed holds true. Further,
a version of Lemma \ref{lem:revdiag} replacing last passage time
by first passage time is still true. Then, so are Theorem \ref{thm:eventa},
Proposition \ref{prop:nonflatterninterval}, Proposition \ref{prop:rateofconv}
and Theorem \ref{thm:mainmain}. Combining all these results finally
leads to:

\begin{thm}
In directed Bernoulli first passage site percolation, let the curvature
power $\kappa$ of the shape function $g$ at $(1,1)$ be such that
$1\le\kappa<+\infty$. Then, in a $n\times n$ grid, with probability
exponentially close to $1$, all the geodesics are within the cylinder,
centered on the main diagonal and of width $O(n^{\frac{2\kappa+2}{2\kappa+3}}\sqrt{\ln n})$.
\end{thm}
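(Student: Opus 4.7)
The plan is to mirror the proof of Theorem \ref{thm:mainmain} almost verbatim, replacing every last-passage ingredient by its first-passage analog. In DFPP with i.i.d.\ Bernoulli weights $w\in\{0,1\}$, the first-passage time $T^{F}(V_{1},V_{2})$ is the minimum over all directed paths from $V_{1}$ to $V_{2}$; by subadditivity, the limits $g^{F}(x,y)=\lim_{n}\mathbb{E}T^{F}(nx,ny)/n$, $g^{F}_{\perp}(q)=g^{F}(1-q,1+q)$ and $g^{F}_{\leftthreetimes}(p)=2\lim_{n}\mathbb{E}T^{F}(n,np)/(n(1+p))$ exist and are \emph{convex} in their arguments, with $g^{F}_{\perp}$ attaining its \emph{minimum} at $q=0$.

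First I would establish the $\kappa$-assumption analog of \eqref{eq:assumgperp} for $g^{F}_{\perp}$ around $q=0$, and verify Proposition \ref{prop:nonflatterninterval} in the DFPP setting: the trivial upper bound $\mathbb{E}T^{F}(n,n)\le 2ns$ (obtained by averaging over any single path) combined with a Martin-type lower bound on $g^{F}(1,y)$ yields a threshold $t\in(0,1)$ such that $g^{F}_{\perp}(q)>g^{F}_{\perp}(0)$ for $|q|>t$. Next I would observe that Lemma \ref{lem:revdiag} carries over unchanged: on any strictly decreasing set $S$ a directed path meets at most one vertex, so resampling $S$ perturbs $T^{F}(V_1,V_2)$ by at most $1$. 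This gives bounded martingale differences along the reversed-diagonal batches $D_1,\dots,D_{2n}$, and Hoeffding's inequality then yields the same $O(\sqrt{\ln n/n})$ rate in the DFPP version of Proposition \ref{prop:rateofconv}, after noting that the partition identity \eqref{eq:identity} becomes $T^{F}_{kn}=\min_{\overrightarrow{V}}T^{F}(\overrightarrow{V})$ by subadditivity, and that $\mathbb{E}T^{F}(\overrightarrow V)\ge (k/2)\mathbb{E}T^{F}(2n,2n)$.

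With these pieces in place, the DFPP analog of Lemma \ref{lem:linske} follows by reversing the inequalities: for $\overrightarrow{r}\in R^{c}_{\eta,p_1,p_2}$, the convexity of $g^{F}_{\leftthreetimes}$ off a neighborhood of $p=1$ gives $\mathbb{E}(T^{F}_{n}(\overrightarrow{r})-T^{F}_{n})\ge \delta\eta n/2$, and then Theorem \ref{thm:eventa} holds in the form $\mathbb{P}(A^{n}_{\eta,p_1,p_2})\ge 1-\exp(-n(-\tfrac{1+\ln k}{k}+\tfrac{\delta^2\eta^2}{16}))$, since an optimal decomposition in DFPP now satisfies $T^{F}_{n}(\overrightarrow r)\le T^{F}_{n}$ (the subadditive complement of \eqref{eq:optr}) and Hoeffding applies to $T^{F}_{n}(\overrightarrow r)-T^{F}_{n}$ with the same $\pm 2$ bounded differences. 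The geometric confinement argument of Section \ref{Sec:closeness2diag} (cases $x=uk\le n\eta$, $x=uk>n\eta$, $uk<x<(u+1)k$) depends only on the combinatorics of the blocks in $R_{\eta,p_1,p_2}$ and the monotonicity of directed paths, so it is untouched by the switch from max to min. Optimizing $\alpha=(2\kappa+2)/(2\kappa+3)$ and $\beta=1/(2\kappa+3)$ exactly as in the proof of Theorem \ref{thm:mainmain} then gives the claimed width $O(n^{(2\kappa+2)/(2\kappa+3)}\sqrt{\ln n})$ and exponentially small failure probability.

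The main obstacle I anticipate is the DFPP analog of Proposition \ref{prop:nonflatterninterval}: the DLPP proof relied on an explicit Martin-type upper bound on $g(1,y)$ together with the block-diagonal lower bound of Lemma \ref{lem:.lbg}, and in DFPP both inequalities must be reversed. A lower bound on $g^{F}(1,y)$ is easy (any path through a $1\times 1$ diagonal block has a strictly positive probability of forcing weight $1$), but producing a matching \emph{strict upper bound} $g^{F}(1,1)<2s$ requires constructing a small deterministic family of directed paths whose minimum passage time beats the single-path expectation by $\Omega(n)$ in expectation; this is the one place where the DLPP argument does not transcribe verbatim and where a short independent computation, analogous to the $T_u^i\vee T_r^i$ trick but using $T_u^i\wedge T_r^i$ inside each diagonal block, is needed. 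Once that threshold is in hand, the remainder of the proof is a mechanical transcription of Sections \ref{Sec:skewblocks}--\ref{Sec:closeness2diag}.
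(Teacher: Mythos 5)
Your proposal is correct and takes essentially the same route as the paper, whose proof of this theorem is precisely the transfer you describe: Lemma \ref{lem:revdiag} unchanged, Lemma \ref{lem:linske} with (\ref{eq:lineardev}) reversed, the analogues of Propositions \ref{prop:nonflatterninterval} and \ref{prop:rateofconv} (with $T_{kn}=\min_{\overrightarrow{V}}T(\overrightarrow{V})$ by subadditivity), and then Theorems \ref{thm:eventa} and \ref{thm:mainmain} verbatim in the convex first-passage setting. The one step you flag as non-verbatim is indeed the right point to watch, and your fix is the correct one: the analogue of Lemma \ref{lem:.lbg} via $T_{u}^{i}\wedge T_{r}^{i}$ gives the strict gap $g(1,1)\le s+s^{2}<2s$ in DFPP (and, under the curvature hypothesis $1\le\kappa<\infty$, the choice $p_{1,2}=1\pm n^{-\beta}$ already yields $\delta^{*}=cn^{-\kappa\beta}>0$, so the explicit threshold of Proposition \ref{prop:nonflatterninterval} is not strictly needed for the theorem as stated).
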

To gain a better intuitive view of the concentration order, let, as
commonly believed, $\kappa=2$. Then, the order is $O(n^{6/7}\sqrt{\ln n})$.
Again, it is conjectured that the correct order should be $O(n^{2/3})$
and a currently available bound for the exponent $\text{\ensuremath{\xi}}$
is $3/4$, which has been shown in \cite{newman1995divergence}, in
the setting of first passage percolation on grids in arbitrary dimension.

It is further worth mentioning that our methodology can also be adapted
to produce the order of the closeness to the diagonal for the optimal
alignments corresponding to the LCSs of two random words of size $n$.
In that setting, it is known that the curvature power of the shape
function of the LCSs at $(1,0)$ and $(0,1)$ is equal to $1$ (see
the proof of Lemma $2.1$ in \cite{houdre2009closeness}). However,
the value of $\kappa$ (the curvature power at $(1,1)$) remains unknown
but we conjecture it to be equal to $2$, as in the percolation models.
Adapting our methods leads to:
\begin{thm}
\label{thm:lcsepsilon } In the longest common subsequences problem,
let the curvature power $\kappa$ of the shape function $g$ at $(1,1)$
be such that $1\le\kappa<+\infty$. Then, with probability exponentially
close to $1$, all the alignments corresponding to the longest common
subsequences of two random words of length $n$ are within the cylinder,
centered on the main diagonal and of width of order $O(n^{\frac{2\kappa+2}{2\kappa+3}}\sqrt{\ln n})$.
\end{thm}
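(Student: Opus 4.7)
The plan is to mirror the structure of the proof of Theorem \ref{thm:mainmain}, reinterpreting the alignment grid of two random words $X = X_1 \cdots X_n$ and $Y = Y_1 \cdots Y_n$ as a directed last-passage grid with vertex weights $w(i,j) := \mathbf{1}_{\{X_i = Y_j\}}$. The length $LC_n$ of the LCS is then exactly the last passage time on this grid, so the block decomposition $\overrightarrow{r}$, the non-skewness event $A^n_{\eta,p_1,p_2}$, and the line-confinement event $D^n_{\eta,p_1,p_2}$ all admit direct LCS analogs. The only essential novelty is that the weights are no longer independent --- each letter $X_i$ controls an entire column and each $Y_j$ an entire row --- so the strictly-decreasing-diagonal resampling of Lemma \ref{lem:revdiag} has to be replaced by a resampling at the level of letters.

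First I would establish the letter-level bounded-difference analog of Lemma \ref{lem:revdiag}: independently resampling a single letter $X_{i_0}$ (resp. $Y_{j_0}$) changes $LC_n$ by at most $1$, because any increasing alignment uses at most one vertex of the column $\{i_0\} \times [n]$ (resp. the row $[n] \times \{j_0\}$), and the same bound applies to the last-passage time on every sub-rectangle appearing in a block. Viewing $T_n$ and $T_n(\overrightarrow{r})$ as Doob martingales over the $2n$ independent letters then yields martingales of length $2n$ with unit increments, and hence increments of their difference bounded by $2$. This is precisely the Hoeffding setting exploited in Proposition \ref{prop:rateofconv} and in \eqref{eq:martconbd}, so the partition argument giving the rate $0 \le g_{\leftthreetimes}(1) - \mathbb{E} LC_n/n = O(\sqrt{\ln n/n})$ and the linear-deviation Lemma \ref{lem:linske} transfer without modification.

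Next I would supply the non-flatness ingredient replacing Proposition \ref{prop:nonflatterninterval}. For LCS one has $g_{\perp}(\pm 1^{\mp}) = 0 < g_{\perp}(0)$ (one side of the alignment rectangle degenerates to length $0$), so by concavity $g_{\perp}$ is strictly below its maximum outside its maximal flat interval around $0$; the curvature hypothesis $1 \le \kappa < +\infty$ at $(1,1)$ forces this flat interval to be $\{0\}$, and a compactness argument then furnishes a threshold $t \in (0,1)$ and a gap $\delta^* > 0$ such that $g_{\perp}(q) \le g_{\perp}(0) - \delta^*$ for every $|q| > t$. Feeding this into the proof of Theorem \ref{thm:eventa}, and using the letter-resampling martingale of the previous paragraph, shows that $\mathbb{P}(A^n_{\eta,p_1,p_2})$ is exponentially close to $1$.

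Finally the line-confinement argument of Section \ref{Sec:closeness2diag} applies verbatim: non-skewed optimal decompositions force every LCS alignment between two lines of slopes $1 \pm n^{-\beta}$ with intercept shifts of order $k + n\eta$. Under the curvature hypothesis, $\delta^* = \Theta(n^{-\kappa\beta})$ for $p_{1,2} = 1 \pm n^{-\beta}$, so the choices $k = n^\alpha$, $\delta = \delta^*/2$, and $\eta = \Theta(n^{\kappa\beta - \alpha/2}\sqrt{\ln n})$ again impose the balance $1-\beta = 1 + \kappa\beta - \alpha/2 = \alpha$, yielding $\alpha = (2\kappa+2)/(2\kappa+3)$ and $\beta = 1/(2\kappa+3)$ and the advertised cylinder width. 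The main obstacle is the quantitative non-flatness step: one has far less direct information about the LCS shape function $g$ than in the independent-Bernoulli DLPP case, and converting the qualitative non-flatness into the matching lower bound $g_{\perp}(0) - g_{\perp}(\pm n^{-\beta}) = \Omega(n^{-\kappa\beta})$ relies crucially on the assumed curvature power at $(1,1)$ --- which is precisely the content of the hypothesis.
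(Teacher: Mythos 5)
The key flaw is in the opening sentence: with vertex weights $w(i,j)=\mathbf{1}_{\{X_i=Y_j\}}$, the unit-step up-right last passage time $T_n$ of Section \ref{Sec:Preliminaries} is \emph{not} equal to $LC_n$. An up-right path may visit several consecutive vertices in the same column or row and collect their weights multiple times, whereas an alignment may match each letter at most once. For instance, with $X=Y=AA$ and $n=2$, every cell $(i,j)$ with $i,j\in\{1,2\}$ has weight $1$, so the path $(0,0),(1,0),(1,1),(1,2),(2,2)$ gives $T_2\ge 3$, while $LC_2=2$. Thus $LC_n$ is the maximum over \emph{strictly increasing chains} of matching cells, not over up-right paths, and the identity ``$LC_n$ is exactly the last passage time on this grid'' is false. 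One cannot simply import the geodesic-confinement statement for this vertex-weighted DLPP and read it off as a statement about optimal alignments.

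That said, the remainder of your sketch implicitly repairs this error: your bounded-difference step, ``any increasing alignment uses at most one vertex of the column $\{i_0\}\times[n]$,'' is a statement about chains, not about up-right paths, and it is correct. Everything you need from the DLPP machinery — superadditivity of the block-restricted score, existence of an optimal decomposition along the alignment, the partition identity behind Proposition \ref{prop:rateofconv}, and the Hoeffding martingale bound — goes through if one works throughout with $LC_n$ and its block restrictions $LC_n(\overrightarrow{r})$ in place of $T_n$ and $T_n(\overrightarrow{r})$, using $2n$ (resp.~$2kn$) independent letters as the filtration in place of the antidiagonal batches of Lemma \ref{lem:revdiag}. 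Your replacement for Proposition \ref{prop:nonflatterninterval}, using $g_\perp(\pm 1^\mp)=0$ together with concavity and the finite curvature power $\kappa$ at $(1,1)$ to get a threshold $t$ and gap $\delta^*$, and your final tuning $k=n^\alpha$, $p_{1,2}=1\pm n^{-\beta}$, $\delta^*=\Theta(n^{-\kappa\beta})$, $\eta=\Theta(n^{\kappa\beta-\alpha/2}\sqrt{\ln n})$ with $\alpha=(2\kappa+2)/(2\kappa+3)$, $\beta=1/(2\kappa+3)$, are exactly what the paper's argument calls for. So: drop the false equivalence $LC_n=T_n$, phrase the whole argument for $LC_n$ and alignments directly (as in \cite{houdre2009closeness}), and the sketch is a sound adaptation of the kind the paper alludes to (the paper states the theorem without proof, saying only that the methods adapt).
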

Let the exponent of transversal fluctuations $\xi$ be:
\[
\xi=\inf\{\gamma>0:\liminf_{n\rightarrow+\infty}\mathbb{P}(A_{n}^{\gamma})=1\},
\]
where $A_{n}^{\gamma}$ is the event that all the optimal alignments
are confined to a cylinder centered on the main diagonal and of width
of order $n^{\gamma}$. Therefore, from Theorem \ref{thm:lcsepsilon },
for LCSs, $\xi\le(2\kappa+2)/(2\kappa+3)$. Moreover, as previously
mentioned, the shape fluctuations exponent for LCSs has been shown
to be $\chi=1/2$, i.e., $Var(LC_{n})=\Theta(n)$, for various asymmetric
discrete distribution on any finite dictionary (see \cite[...]{HoudreLCSVARLB2012,gong2015lower,lember2009standard}).
But, by the conjectured KPZ universality relation with curvature power
$\kappa$, 

\begin{align*}
\chi & =\kappa\xi-(\kappa-1).
\end{align*}
This leads, for $\chi=1/2$, to $\xi=(2\kappa-1)/(2\kappa)$ which
we conjecture to be equal to $3/4$.

\section*{Acknowledgments}

Many thanks to M. Damron and R. Gong for their bibliographical help
and their numerous comments which greatly helped to improve this manuscript.

\bibliographystyle{plain}
\phantomsection\addcontentsline{toc}{section}{\refname}\bibliography{jointfinal.bbl}

\end{document}